\newtheorem{theorem}{Theorem}[section]
\newtheorem{lemma}[theorem]{Lemma}
\newtheorem{remark}[theorem]{Remark}
\newtheorem{example}[theorem]{Example}
\newtheorem{corollary}[theorem]{Corollary}
\newtheorem{proposition}[theorem]{Proposition}
\newtheorem{hypothesis}[theorem]{Hypothesis}
\newtheorem{definition}[theorem]{Definition}
\begin{document}
\title[Information Projection on Banach Spaces]{Information Projection on Banach spaces with Applications to State Independent KL-Weighted Optimal Control}
\author{Zachary Selk}
\address{Department of Mathematics, Purdue University, West Lafayette IN 47907, USA.}
\author{William Haskell}
\address{Krannert School of Management, Purdue University, West Lafayette IN 47907, USA.}
\author{Harsha Honnappa}
\address{School of Industrial Engineering, Purdue
University, West Lafayette IN 47907, USA.}
\keywords{}
\email{[zselk,whaskell,honnappa]@purdue.edu}

\thanks{H. Honnappa is supported by the NSF grant DMS-1812197.}
  
 \subjclass[2010]{60K25, 60G55, 60F15, 90B22}

\date{\today}
\maketitle
\begin{abstract}
This paper studies constrained information projections on Banach spaces with respect to a Gaussian reference measure. Specifically our interest lies in characterizing projections of the reference measure, with respect to the KL-divergence, onto sets of measures corresponding to changes in the mean (or {\it shift measures}). As our main result, we give a portmanteau theorem that characterizes the relationship among several different formulations of this problem. In the general setting of Gaussian measures on a Banach space, we show that this information projection problem is equivalent to minimization of a certain Onsager-Machlup (OM) function with respect to an associated stochastic process. We then construct several reformulations in the more specific setting of classical Wiener space. First, we show that KL-weighted optimization over shift measures can also be expressed in terms of an OM function for an associated stochastic process that we are able to characterize. Next, we show how to encode the feasible set of shift measures through an explicit functional constraint by constructing an appropriate penalty function. Finally, we express our information projection problem as a calculus of variations problem, which suggests a solution procedure via the Euler-Lagrange equation. We work out the details of these reformulations for several specific examples.

\end{abstract}

\section{Introduction}
This paper studies constrained information projections on Banach spaces with respect to a Gaussian reference measure. Specifically, our interest lies in characterizing projections of the reference measure onto sets of measures corresponding to changes in the mean (or {\it shift measures}). Information projection onto shift measures emerges in a number of domains of applied probability including stochastic optimal control, approximate inference, and large deviations analysis. Let $\mathcal B$ be a separable Banach space, $\mu_0$ a centered Borel Gaussian measure on $\mathcal B$, and $\mu^\ast\sim \mu_0$ another Borel measure where $\sim$ denotes equivalence of measures. The symbol $\mathcal H_{\mu_0} \hookrightarrow \mathcal B$ represents the Cameron-Martin space associated to $\mu_0$. By the Cameron-Martin Theorem~\cite[Theorem 3.41]{hairer2009introduction}, the shift measure $T_h^\ast(\mu_0)(\cdot):=\mu_0(\cdot - h)$ for $h\in \mathcal B$ is absolutely continuous with respect to $\mu_0$ if and only if $h\in \mathcal H_{\mu_0} $. Let $\mathcal P$ denote the set of all shift measures which are absolutely continuous with respect to $\mu_0$. We are interested in computing the information projection of $\mu^*$ onto $\mathcal{P}$, $\inf_{\mu\in \mathcal P} D_{KL}(\mu||\mu^\ast)$, where $D_{KL}(\mu||\mu^\ast)$ is the KL-divergence of $\mu$ with respect to $\mu^\ast$.

The main result of this paper characterizing the information projection is consolidated in the following {\it portmanteau} theorem. For this result, we let $\operatorname{val}(\cdot)$ and $\operatorname{sol}(\cdot)$ denote the optimal value and the set of optimal solutions of an optimization problem, respectively.

\begin{theorem}\label{theorem:Portmanteau}[A Portmanteau Theorem]
Let $\mathcal B$ be a separable Banach space. Let $\mu_0$ be a Gaussian measure on $\mathcal B$ (see Definition \ref{def:Gaussian-measure}) with Cameron-Martin space $\mathcal H_{\mu_0}$ (see Definition \ref{def:CM-space}). Let $C:\mathcal B\to \mathbb R$ be a functional satisfying Hypothesis \ref{hypothesis:FE}. Furthermore, assume that the functional $\Phi:\mathcal B\to \mathbb R$ defined by $\Phi(z)=E_{\mu_0}[C(\omega+z)]$ exists and satisfies Hypothesis \ref{hypothesis:OM}. Define the probability measure $\mu^\ast$ on $\mathcal B$ with density 
$$\frac{d\mu^\ast}{d\mu_0}:=\frac{e^{-C}}{E_{\mu_0}[e^{-C}]},$$
and the probability measure $\tilde \mu$ on $\mathcal B$ with density
$$\frac{d\tilde{\mu}}{d\mu_0}:=\frac{e^{-\Phi}}{E_{\mu_0}[e^{-\Phi}]}.$$
Let $\mathcal P$ be the set of Gaussian shift measures which are absolutely continuous with respect to $\mu_0$ (defined in Eq. \eqref{eq:P-set-def}). Consider the following three optimization problems:

(a) (Information projection) $$\mathbb K:=\inf_{\mu\in \mathcal P} D_{KL}(\mu||\mu^\ast).$$

(b) (State independent KL-weighted control) 
$$\mathbb P:=\inf_{\mu\in \mathcal P} \left\{E_\mu [C]+D_{KL}(\mu||\mu_0)\right\}.$$

(c) (Mode of $\tilde \mu$) $$\mathbb M:=\inf_{z\in \mathcal H_{\mu_0}} OM_\Phi(z).$$
Then, the optimal values $\operatorname{val}(\mathbb K)$, $\operatorname{val}(\mathbb P)$, and $\operatorname{val}(\mathbb M)$ are all attained (and so all three problems have optimal solutions). By the Cameron-Martin theorem (see Theorem \ref{theorem:CM}), we can identify each shift measure $\mu_z \in \mathcal P$ with its corresponding shift $z\in \mathcal H_{\mu_0}$, and so we have
\[
   \operatorname{sol}(\mathbb K)=\operatorname{sol}(\mathbb P) \equiv \operatorname{sol}(\mathbb M).
\]
In the above, we have used equivalence ``$\equiv$'' for shorthand to denote that $\mu_z\in \operatorname{sol}(\mathbb P)$ (or $\mu_z\in \operatorname{sol}(\mathbb K)$) if and only if its corresponding shift $z\in \operatorname{sol}(\mathbb M).$

Furthermore, suppose $(\mathcal B,\mu_0)$ is the classical Wiener space (see Definition \ref{def:Wiener-space}) and $C$ satisfies Hypothesis \ref{hypothesis:L2}. Let $L(t,z,\dot z)$ be the Lagrangian (defined in Eq. \eqref{eq:lagrangian-definition}), and suppose $L(t,z,\dot z)$ satisfies Hypothesis \ref{assum:coercivity}. Consider the following calculus of variations problem:

(d) (Calculus of variations)
$$
\mathbb L : \inf_{q \in W_0^{1,2}[0,T]} \left\{ \mathcal{L}(q,\dot{q};\,f) := \frac{1}{2} \int_0^T L(t,q(t),\dot{q}(t);\,f)dt \text{ s.t. } q(0) = 0 \right\}.
$$
Then, the optimal value $\operatorname{val}(\mathbb L)$ is attained (so Problem $\mathbb L$ has an optimal solution). We continue to identify each shift measure $\mu_z \in \mathcal P$ with its corresponding shift $z\in \mathcal H_{\mu_0}$ by the Cameron-Martin theorem (see Theorem \ref{theorem:CM}), and so we have

\[
     \operatorname{sol}(\mathbb K)=\operatorname{sol}(\mathbb P) \equiv \operatorname{sol}(\mathbb M)=\operatorname{sol}(\mathbb L).
\]
Again, as above we use $\equiv$ to denote that $\mu_z\in \operatorname{sol}(\mathbb P)$ (or $\mu_z\in \operatorname{sol}(\mathbb K)$) if and only if its corresponding shift $z\in \operatorname{sol}(\mathbb M)$ (or $z\in \operatorname{sol}(\mathbb L)$).

\end{theorem}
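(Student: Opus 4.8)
The plan is to show that, after identifying each shift measure $\mu_z := T_z^\ast(\mu_0)\in\mathcal P$ with its defining shift $z\in\mathcal H_{\mu_0}$ via the Cameron--Martin theorem (Theorem~\ref{theorem:CM}), all four problems are, up to a $z$-independent additive constant, the \emph{same} optimization problem over $\mathcal H_{\mu_0}$; existence of a minimizer then follows by the direct method. \textbf{Step 1: reducing $\mathbb K$ to $\mathbb P$.} For $\mu\in\mathcal P$ we have $\mu\ll\mu_0$ and, since $\mu^\ast\sim\mu_0$, also $\mu\ll\mu^\ast$; the Radon--Nikodym chain rule together with $d\mu^\ast/d\mu_0=e^{-C}/E_{\mu_0}[e^{-C}]$ gives $\log(d\mu/d\mu^\ast)=\log(d\mu/d\mu_0)+C+\log E_{\mu_0}[e^{-C}]$ $\mu$-a.e., hence $D_{KL}(\mu\|\mu^\ast)=D_{KL}(\mu\|\mu_0)+E_\mu[C]+\log E_{\mu_0}[e^{-C}]$. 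Since $\log E_{\mu_0}[e^{-C}]$ is a finite $\mu$-independent constant (Hypothesis~\ref{hypothesis:FE}), $\mathbb K$ and $\mathbb P$ have the same feasible set and objectives differing by that constant; in particular $\operatorname{sol}(\mathbb K)=\operatorname{sol}(\mathbb P)$, and one value is attained iff the other is.

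\textbf{Step 2: computing the objective of $\mathbb P$ and matching it with $OM_\Phi$.} Fix $z\in\mathcal H_{\mu_0}$. By the Cameron--Martin theorem, $d\mu_z/d\mu_0=\exp(\widehat z-\tfrac12\|z\|_{\mathcal H_{\mu_0}}^2)$, where $\widehat z$ is the Paley--Wiener functional of $z$, a centered Gaussian of variance $\|z\|_{\mathcal H_{\mu_0}}^2$ under $\mu_0$. Writing a $\mu_z$-distributed element as $\omega_0+z$ with $\omega_0\sim\mu_0$ and using the identity $\widehat z(\omega_0+z)=\widehat z(\omega_0)+\|z\|_{\mathcal H_{\mu_0}}^2$, we get $D_{KL}(\mu_z\|\mu_0)=E_{\mu_z}[\widehat z]-\tfrac12\|z\|_{\mathcal H_{\mu_0}}^2=\tfrac12\|z\|_{\mathcal H_{\mu_0}}^2$. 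Also, by the change of variables $\omega\mapsto\omega+z$ and the definition of $\Phi$, $E_{\mu_z}[C]=E_{\mu_0}[C(\omega+z)]=\Phi(z)$. Hence the objective of $\mathbb P$ at $\mu_z$ equals $\Phi(z)+\tfrac12\|z\|_{\mathcal H_{\mu_0}}^2$, which in turn equals $OM_\Phi(z)$ up to a $z$-independent constant by the definition of the Onsager--Machlup functional of $\tilde\mu$ under Hypothesis~\ref{hypothesis:OM}. As $z\mapsto\mu_z$ is a bijection of $\mathcal H_{\mu_0}$ onto $\mathcal P$, this yields $\operatorname{sol}(\mathbb P)\equiv\operatorname{sol}(\mathbb M)$ and shows the three problems are attained simultaneously.

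\textbf{Step 3: existence, and the Wiener-space problem $\mathbb L$.} It remains to show $J(z):=\Phi(z)+\tfrac12\|z\|_{\mathcal H_{\mu_0}}^2$ attains its infimum on $\mathcal H_{\mu_0}$. Given a minimizing sequence $(z_n)$, Hypothesis~\ref{hypothesis:OM} (under which $\Phi$ is bounded below) forces $\|z_n\|_{\mathcal H_{\mu_0}}$ to be bounded, so along a subsequence $z_n\rightharpoonup z^\ast$ weakly in the Hilbert space $\mathcal H_{\mu_0}$; by compactness of the embedding $\mathcal H_{\mu_0}\hookrightarrow\mathcal B$ also $z_n\to z^\ast$ in $\mathcal B$. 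Weak lower semicontinuity of $\|\cdot\|_{\mathcal H_{\mu_0}}^2$ and the lower semicontinuity of $\Phi$ afforded by Hypothesis~\ref{hypothesis:OM} give $J(z^\ast)\le\liminf_n J(z_n)=\mathbb M$, so $z^\ast$ is a minimizer, and with Steps~1--2 we conclude $\operatorname{sol}(\mathbb K)=\operatorname{sol}(\mathbb P)\equiv\operatorname{sol}(\mathbb M)$, all attained. For the last assertion, let $(\mathcal B,\mu_0)$ be classical Wiener space, so $\mathcal H_{\mu_0}=W_0^{1,2}[0,T]$ with $\|z\|_{\mathcal H_{\mu_0}}^2=\int_0^T\dot z(t)^2\,dt$. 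Under Hypothesis~\ref{hypothesis:L2} one evaluates $\Phi(z)=E_{\mu_0}[C(\omega+z)]$ explicitly --- interchanging the expectation with the time integral by Fubini --- as an integral over $[0,T]$ of a function of $(t,z(t))$, so that $OM_\Phi(z)=\tfrac12\int_0^T L(t,z(t),\dot z(t);f)\,dt=\mathcal L(z,\dot z;f)$ with $L$ as in Eq.~\eqref{eq:lagrangian-definition}. Thus $\mathbb M$ and $\mathbb L$ are the \emph{same} minimization over $z\in\mathcal H_{\mu_0}=W_0^{1,2}[0,T]$, whence $\operatorname{sol}(\mathbb M)=\operatorname{sol}(\mathbb L)$; attainment of $\mathbb L$ follows from Step~3 (or directly from Tonelli's theorem, using convexity of $L$ in $\dot z$ and the coercivity in Hypothesis~\ref{assum:coercivity}). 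Assembling everything gives $\operatorname{sol}(\mathbb K)=\operatorname{sol}(\mathbb P)\equiv\operatorname{sol}(\mathbb M)=\operatorname{sol}(\mathbb L)$.

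\textbf{Main obstacle.} The crux is Step~2: verifying that the explicit functional $\Phi(z)+\tfrac12\|z\|_{\mathcal H_{\mu_0}}^2$ genuinely is, up to a constant, the Onsager--Machlup functional of $\tilde\mu$ --- i.e.\ that the small-ball ratios of $\tilde\mu$ are controlled by it --- which is precisely what Hypothesis~\ref{hypothesis:OM} is designed to provide and which rests on sufficient regularity of $\Phi$ in the Cameron--Martin norm. A secondary technical point is the Fubini interchange and integrability bookkeeping in Step~3 that brings $\Phi$ into Lagrangian form under Hypothesis~\ref{hypothesis:L2}. The existence arguments themselves are routine applications of the direct method once the coercivity and lower semicontinuity in Hypotheses~\ref{hypothesis:OM} and~\ref{assum:coercivity} are in hand.
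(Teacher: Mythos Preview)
Your Steps 1--3 are essentially the paper's argument for $\operatorname{sol}(\mathbb K)=\operatorname{sol}(\mathbb P)\equiv\operatorname{sol}(\mathbb M)$: the paper uses Lemma~\ref{lemma:Bierkens-General} for Step~1, Theorem~\ref{theorem:KL-for-shift} and the definition of $\Phi$ for Step~2, and cites \cite[Prop.~3.4]{Stuart-OM} for existence where you instead sketch the direct method. These are the same ideas.

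The gap is in your treatment of $\mathbb L$. You assert that under Hypothesis~\ref{hypothesis:L2} the functional $\Phi(z)=E_{\mu_0}[C(\omega+z)]$ can be written ``by Fubini'' as an integral over $[0,T]$ of a function of $(t,z(t))$. This is not true in general, and Hypothesis~\ref{hypothesis:L2} does not provide it: that hypothesis only gives the It\^o representation $C=E_{\mu_0}[C]+\int_0^T f(t,B(t))\,dB(t)$, and when you substitute $B\mapsto B+z$ you pick up a term $\int_0^T f(t,B(t)+z(t))\,\dot z(t)\,dt$ whose expectation depends on $\dot z$, not just on $z(t)$. So $\Phi$ is \emph{not} of the form $\int g(t,z(t))\,dt$; the Lagrangian $L$ genuinely mixes $z$ and $\dot z$ through the cross term $-2\dot z(t)\,E_{\mu_0}[f(t,B(t)+z(t))]$. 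Your Fubini argument, as stated, does not produce $\mathcal L$.

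The paper avoids this by going a different route (Theorem~\ref{theorem:main}): it computes $D_{KL}(\mu_q\|\mu^\ast)$ directly via Girsanov. Writing $d\mu^\ast/d\mu_0=\exp(\int f\,dB-\tfrac12\int f^2)$ and $d\mu_q/d\mu_0=\exp(\int \dot q\,dB-\tfrac12\int \dot q^2)$, one obtains
\[
D_{KL}(\mu_q\|\mu^\ast)=\tfrac12\,E_{\mu_0}\!\int_0^T\bigl(\dot q(t)-f(t,B(t)+q(t))\bigr)^2\,dt=\mathcal L(q,\dot q;f),
\]
which is exactly $\mathbb L$. Since you already know from Step~1 that $D_{KL}(\mu_q\|\mu^\ast)$ differs from $OM_\Phi(q)$ by a constant, this gives $\operatorname{sol}(\mathbb M)=\operatorname{sol}(\mathbb L)$ cleanly. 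You could salvage your approach by carrying out the It\^o substitution carefully (keeping the $\dot z$ dependence), but the paper's Girsanov computation is more direct and sidesteps the need to relate the It\^o $f$ of $C$ to the Girsanov $f$ of $\mu^\ast$.
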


In summary, the principal technical contributions of the present paper are:

\noindent (i)  For Gaussian reference measures over an arbitrary Banach space, we show that the optimal Gaussian shift measure of the (constrained) information projection problem corresponds precisely to the minimizer of the Onsager-Machlup function of an associated Gibbs measure over the Banach space (that is fully characterized). As a direct consequence, the optimal open loop KL-weighted control function can be viewed as finding the most likely path of a stochastic process associated with the Gibbs measure. 

\noindent (ii) We further characterize the information projection onto Gaussian shift measures with respect to a reference Wiener measure. In particular, we derive a single functional constraint to express the feasible region $\mathcal P$. This functional constraint reveals that the feasible region of this optimization problem is non-convex, and we also find that standard convex inner and outer relaxations only yield ``trivial'' approximations.

\noindent (iii) Finally, we re-formulate the information projection problem and show that the optimal solution corresponds to a calculus of variations problem. More precisely, we show that the solution is completely characterized by the solution of an Euler-Lagrange equation. This yields a tractable representation, which we illustrate through a number of examples.

\paragraph{\bf Related Literature.} Information projections onto shift measures appear in a number of domains of applied probability, including stochastic optimal control, large deviations theory and approximate inference. Kullback-Liebler (KL) or relative-entropy weighted control~\cite{Bierkens,Todorova,Todorovb} studies a class of restricted control problems where the total cost of a given control policy can be expressed as the KL-divergence between the path measure of the controlled process and that of the Wiener reference measure. Our investigation has special relevance for KL-weighted optimal control in `open-loop' settings where the control is not state dependent, such as in the control of large ensembles of particles, such as fleets of vehicles, modeling flocking behavior~\cite{brockett2012notes}, material science applications~\cite{whitelam2020learning,tang2017comparison} and in power systems~\cite{chertkov2017ensemble, chertkov2018ensemble,metivier2020mean} (for load management for example). 

The utility of the KL-weighted control formulation comes from a variational formula proved in~\cite{Bierkens} (and in~\cite{BoueDupuis} in a more restricted setting) that shows that the optimal value of the KL-weighted control is precisely the logarithm of the mean exponentiated cost under the Wiener measure.  However, even though the optimal value (and corresponding optimal measure) can be characterized, computing the optimal control function itself is still a non-trivial problem. For example,~\cite{Bierkens} shows that the optimal control function can be computed as the F\"ollmer drift~\cite{follmer1985entropy,follmer1986time,lehec2013representation} corresponding to the optimizer of the information projection. In most cases, the computation of the F\"ollmer drift is fraught; for instance,~\cite{Bierkens} uses the Clark-Occone formula of Malliavin calculus to compute the optimal control function. 

\paragraph{\bf Organization.} In the next section we present preliminary notation, definitions, and standard results that will be used throughout the paper. In Section~\ref{sec:OM}, we formalize our information projection problem and show that it is equivalent to minimization of an Onsager-Machlup function. In Section~\ref{sec:rewoc} we introduce the constrained relative entropy-weighted optimization problem and connect it to Onsager-Machlup minimization. We then characterize the constraint that the drift is state-independent in Section~\ref{sec:char}. Specifically, we show that this constraint can be expressed by a single functional constraint (which is a difference-of-convex (DC) functions), and we consider some convex relaxations. In Section~\ref{sec:el} we turn to the Euler-Lagrange formulation of the problem and provide a clean derivation of the optimal solution in terms of the solution to the Euler-Lagrange equation. We also offer an interpretation of the solution to this problem in terms of minimizing the Onsager-Machlup function that was introduced in Section~\ref{sec:OM}. Finally, in Section~\ref{sec:examples} we compute the solution of the Euler-Lagrange equation and the associated Onsager-Machlup minimizing process for a number of example cost functions. We conclude the paper in Section~\ref{sec:conclusion} with some further discussion.

\section{Preliminaries}
In this section we collect several classical results that are frequently referenced throughout this paper. First, we discuss results for Gaussian measure theory on Banach spaces. We refer the reader to \cite{Bogachev,hairer2009introduction} for more information on Gaussian measure theory.

We will work on a separable Banach space $(\mathcal B,\,\|\cdot\|)$, where we denote elements of $\mathcal B$ as $\omega \in \mathcal B$. We let $\mathcal B^\ast$ denote the space of continuous linear functionals $\ell:\mathcal B\to \mathbb R$ (with respect to the norm on $\mathcal B$).

\begin{definition}\label{def:Gaussian-measure}\cite[Definition 3.2]{hairer2009introduction}
Let $\mu$ be a Borel probability measure on $\mathcal B$.

(i) The \textbf{pushforward measure} $\ell^\ast(\mu)$ for $\ell \in \mathcal B^\ast$ is defined by $\ell^\ast(\mu)(A):=\mu(\ell^{-1}(A))$ for all Borel $A\subset \mathbb R$.

(ii) $\mu$ is \textbf{Gaussian} if, for all $\ell \in \mathcal B^\ast$, $\ell^\ast (\mu)$ is a Gaussian measure on $\mathbb R$.

(iii) $\mu$ is \textbf{centered} if, for all $\ell \in \mathcal B^\ast$, $\ell^\ast(\mu)$ is centered.
\end{definition}

Formally, we consider Dirac-$\delta$ measures to be Gaussian in the limit of infinitesimally small variance. This convention includes the case where e.g. $\ell\equiv 0$. 

\begin{definition}\label{def:cov-operator}\cite[Equation (3.2)]{hairer2009introduction} 
Let $\mu$ be a centered Gaussian measure on $\mathcal B$. The \textbf{covariance operator} corresponding to $\mu$ is $C_\mu:\mathcal B^\ast\times \mathcal B^\ast \to \mathbb R$ defined by
\begin{equation*}
    C_\mu (\ell, \ell'):= \int_{\mathcal B} \ell(x)\ell'(x) \mu(dx),
\end{equation*}
for all $\ell,\,\ell' \in \mathcal B^\ast$.

\end{definition}
\noindent When $\mathcal B=\mathbb R^d$, we may identify $(\mathbb R^d)^\ast$ with $\mathbb R^d$ so that the covariance operator from Definition \ref{def:cov-operator} is just the usual covariance matrix.

Next we define the Cameron-Martin space which plays a central role in this paper. In particular, in our upcoming information projection problem we will optimize over all shift measures corresponding to the Cameron-Martin space.

\begin{definition}\label{def:CM-space}\cite[Definition 3.24]{hairer2009introduction}
Let $\mu$ be a centered Gaussian measure on $\mathcal B$. The \textbf{Cameron-Martin space} $\mathcal H_\mu$ is the completion of the subspace
\begin{equation*}
    \mathring{\mathcal H}_\mu := \{h\in \mathcal B: \exists h^\ast \in \mathcal B^\ast \text{ with }C_\mu (h^\ast, \ell) =\ell(h),\, \forall \ell \in \mathcal B^\ast \}
\end{equation*}
with respect to the norm $\|\cdot\|_\mu$ corresponding to $\mu$ defined by $\|h\|_\mu :=\langle h,h\rangle_\mu :=C_\mu(h^\ast, h^\ast)$ for all $h \in \mathcal B$.

\end{definition}

\begin{remark}
There might be multiple $h^\ast$ associated to $h$ in the sense of Definition \ref{def:CM-space}.  However, the norm $\|\cdot\|_\mu$ is independent of the choice of $h^\ast$. Furthermore, there is a canonical representation for $h^\ast$ (see e.g. \cite[Proposition 3.31]{hairer2009introduction}). In the rest of our work, we just assume $h^\ast$ is this canonical representation. 
\end{remark}

One of the main achievements of Gaussian measure theory is the Cameron-Martin theorem. For $h\in \mathcal B$, the translation map $T_h:\mathcal B\to \mathcal B$ is defined by $T_h(x):=x+h$ for all $x \in \mathcal B$. Let $\mu_0$ be a reference Gaussian measure on $\mathcal B$. Corresponding to the translation map $T_h$, we define the pushforward measure $\mu:=T_h^\ast(\mu_0)$  by $\mu(A)=T_h^\ast(\mu_0)(A)=\mu_0(T_h^{-1}(A))$ for all Borel $A$.

\begin{theorem}\label{theorem:CM} \cite[Theorem 3.41]{hairer2009introduction}
Let $\mu_0$ be a centered Gaussian measure on $\mathcal B$.  The pushforward measure $\mu:=T_h^\ast(\mu_0)$ is absolutely continuous with respect to $\mu_0$ if and only $h\in \mathcal H_{\mu_0}$. Furthermore, the Radon-Nikodym derivative of $\mu$ with respect to $\mu_0$ is
\begin{equation*}
    \frac{d\mu}{d\mu_0}=\exp \left(h^\ast(x)-\frac{1}{2}\|h\|_\mu^2\right).
\end{equation*}
\end{theorem}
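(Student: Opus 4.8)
The plan is to prove the two assertions separately: the sufficiency of $h\in\mathcal H_{\mu_0}$ together with the explicit Radon--Nikodym formula, and the necessity of $h\in\mathcal H_{\mu_0}$ for absolute continuity. For the sufficiency I would argue with characteristic functionals, and for the necessity I would reduce to a countable family of one-dimensional Gaussians.

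I first treat sufficiency and the formula. Start with $h\in\mathring{\mathcal H}_{\mu_0}$, so there is $h^\ast\in\mathcal B^\ast$ with $C_{\mu_0}(h^\ast,\ell)=\ell(h)$ for all $\ell\in\mathcal B^\ast$, and define the candidate measure $\nu$ by $d\nu=\exp(h^\ast(x)-\tfrac12\|h\|_{\mu_0}^2)\,d\mu_0$, where $\|\cdot\|_{\mu_0}$ is the Cameron--Martin norm of $\mu_0$. One checks $\nu$ is a probability measure, since $h^\ast$ is centered Gaussian with variance $C_{\mu_0}(h^\ast,h^\ast)=\|h\|_{\mu_0}^2$ and hence $E_{\mu_0}[e^{h^\ast}]=e^{\|h\|_{\mu_0}^2/2}$. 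Because two Borel measures on a separable Banach space coincide as soon as their characteristic functionals $\ell\mapsto\int_{\mathcal B}e^{i\ell(x)}\,d(\cdot)$ agree on $\mathcal B^\ast$, it suffices to match $\hat\nu$ with $\widehat{T_h^\ast\mu_0}$. A change of variables gives $\widehat{T_h^\ast\mu_0}(\ell)=e^{i\ell(h)}\hat\mu_0(\ell)=\exp(i\ell(h)-\tfrac12 C_{\mu_0}(\ell,\ell))$. For $\hat\nu$ the pair $(\ell,h^\ast)$ is jointly centered Gaussian under $\mu_0$ with variances $C_{\mu_0}(\ell,\ell)$ and $\|h\|_{\mu_0}^2$ and cross term $C_{\mu_0}(\ell,h^\ast)=\ell(h)$; evaluating the Gaussian moment generating function of $i\ell(x)+h^\ast(x)$ and cancelling the normalizing factor $e^{-\|h\|_{\mu_0}^2/2}$ yields exactly $\exp(i\ell(h)-\tfrac12 C_{\mu_0}(\ell,\ell))$, so $\nu=T_h^\ast\mu_0$ and the formula holds.

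To pass from $\mathring{\mathcal H}_{\mu_0}$ to a general $h\in\mathcal H_{\mu_0}$, I would invoke the Paley--Wiener construction: the map $h\mapsto h^\ast$ extends to a linear isometry from $\mathcal H_{\mu_0}$ onto the closure of $\mathcal B^\ast$ in $L^2(\mu_0)$, so for $h$ in the completion $h^\ast$ is defined as an $L^2(\mu_0)$ limit (only $\mu_0$-a.e.\ as a function on $\mathcal B$), still centered Gaussian with variance $\|h\|_{\mu_0}^2$. Choosing $h_n\in\mathring{\mathcal H}_{\mu_0}$ with $h_n\to h$ in $\mathcal H_{\mu_0}$, the densities converge in $L^1(\mu_0)$ (the family is convergent and uniformly integrable because $h_n^\ast\to h^\ast$ in $L^2(\mu_0)$), while $T_{h_n}^\ast\mu_0\to T_h^\ast\mu_0$ weakly, so the formula persists in the limit.

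For necessity, which I expect to be the main obstacle, I would argue the contrapositive. Using separability, apply Gram--Schmidt to a countable separating family to obtain $\{\ell_n\}\subset\mathcal B^\ast$ that is orthonormal in $L^2(\mu_0)$ with dense span in the first Wiener chaos. Under $\mu_0$ the coordinate map $\Psi(x)=(\ell_n(x))_{n\ge1}$ pushes forward to $\bigotimes_n N(0,1)$, and under $\mu=T_h^\ast\mu_0$ it pushes forward to $\bigotimes_n N(\ell_n(h),1)$. If $\mu\ll\mu_0$, then $\Psi_\ast\mu\ll\Psi_\ast\mu_0$, and Kakutani's dichotomy for product measures---with Hellinger affinity $e^{-a^2/8}$ between $N(a,1)$ and $N(0,1)$---forces $\sum_n\ell_n(h)^2<\infty$. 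The remaining and most delicate step is to convert this coordinatewise summability into genuine membership $h\in\mathcal H_{\mu_0}$: I would establish the variational characterization $\|h\|_{\mu_0}=\sup\{\ell(h):\ell\in\mathcal B^\ast,\ C_{\mu_0}(\ell,\ell)\le1\}$ and identify this supremum with $(\sum_n\ell_n(h)^2)^{1/2}$ by Parseval in the chaos, so that finiteness of the sum is equivalent to $h\in\mathcal H_{\mu_0}$. It is precisely this duality characterization of $\mathcal H_{\mu_0}$, rather than formal manipulation of coordinates, that I expect to require the most care, whereas the characteristic-functional computation in the sufficiency direction is routine once the joint Gaussianity of $(\ell,h^\ast)$ and the identity $C_{\mu_0}(\ell,h^\ast)=\ell(h)$ are in hand.
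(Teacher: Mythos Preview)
The paper does not supply its own proof of this statement: Theorem~\ref{theorem:CM} is quoted verbatim as a preliminary result from \cite[Theorem 3.41]{hairer2009introduction} and is used as a black box throughout. There is therefore nothing in the paper to compare your argument against.

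That said, your sketch is essentially the standard route (and close to what one finds in Hairer's notes or in Bogachev): the characteristic-functional verification for $h\in\mathring{\mathcal H}_{\mu_0}$, extension to the completion via the Paley--Wiener isometry, and Kakutani's dichotomy for necessity. The one place where you are right to flag caution is the final step of necessity: converting $\sum_n \ell_n(h)^2<\infty$ into $h\in\mathcal H_{\mu_0}$ requires more than the variational formula you quote. One typically shows that $g:=\sum_n \ell_n(h)\ell_n$ converges in $L^2(\mu_0)$, defines an element of the reproducing-kernel Hilbert space, and then argues that the associated Cameron--Martin vector coincides with $h$ (using that the $\ell_n$ separate points, or equivalently that the embedding $\mathcal H_{\mu_0}\hookrightarrow\mathcal B$ is injective and the image is characterized by finiteness of the sup you wrote). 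Your plan is sound, but be aware that this identification step carries real content and is where most textbook proofs invest their effort.
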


We now recall the Kullback-Leibler (KL) divergence, which is central to our information projection problem (it is the objective of this problem). Continue to let $\mu_0$ be a reference measure on $\mathcal B$ and let $\mu$ be another measure on $\mathcal B$ such that the Radon-Nikodym derivative $\frac{d\mu}{d\mu_0}$ exists. Then, the KL-divergence of $\mu$ with respect to $\mu_0$ is defined by:
\[
D_{KL}(\mu||\mu_0):=E_{\mu}\left[\log\left(\frac{d\mu}{d\mu_0}\right)\right].
\]
We recall three key properties of the KL-divergence: (i) $D_{KL}(\mu||\mu_0) \geq 0$; (ii) $D_{KL}(\mu||\mu_0) = 0$ if and only if $\mu = \mu_0$; and (iii) $\mu \mapsto D_{KL}(\mu||\mu_0)$ is convex. The next result shows how to compute the KL-divergence between two shift measures drawn from the Cameron-Martin space $\mathcal H_{\mu_0}$.

\begin{theorem}\label{theorem:KL-for-shift}\cite[Lemma 3.20]{Dan-KLD-CM}
Let $\mu_0$ be a centered Gaussian measure on $\mathcal B$ with Cameron-Martin space $\mathcal H_{\mu_0}$, and let $\mu_1 = T_{h_1}^\ast(\mu_0)$ and $\mu_2 = T_{h_2}^\ast(\mu_0)$ for some $h_1,h_2\in \mathcal H_{\mu_0}$. Then, the Radon-Nikodym derivative $\frac{d\mu_1}{d\mu_2}$ exists and
\begin{equation*}
    D_{KL}(\mu_1||\mu_2) = \frac{1}{2}\|h_1-h_2\|_{\mu_0}^2.
\end{equation*}
\end{theorem}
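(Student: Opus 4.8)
The plan is to obtain $d\mu_1/d\mu_2$ explicitly via the Cameron--Martin theorem and then evaluate the defining integral for the KL-divergence directly. By Theorem~\ref{theorem:CM}, since $h_1,h_2\in\mathcal H_{\mu_0}$, both $\mu_1$ and $\mu_2$ are equivalent to $\mu_0$ with strictly positive densities
\[
\frac{d\mu_i}{d\mu_0}(x)=\exp\!\left(h_i^\ast(x)-\tfrac12\|h_i\|_{\mu_0}^2\right),\qquad i=1,2.
\]
In particular $\mu_1\sim\mu_2$, so $d\mu_1/d\mu_2$ exists, and by the chain rule for Radon--Nikodym derivatives I would write it as the ratio of the two densities above. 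Using that the canonical map $h\mapsto h^\ast$ (see Definition~\ref{def:CM-space}) is linear, so that $h_1^\ast-h_2^\ast=(h_1-h_2)^\ast$, this gives
\[
\log\frac{d\mu_1}{d\mu_2}(x)=(h_1-h_2)^\ast(x)-\tfrac12\bigl(\|h_1\|_{\mu_0}^2-\|h_2\|_{\mu_0}^2\bigr).
\]

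With this in hand, the definition $D_{KL}(\mu_1\|\mu_2)=E_{\mu_1}[\log(d\mu_1/d\mu_2)]$ reduces the whole problem to computing the single expectation $E_{\mu_1}\bigl[(h_1-h_2)^\ast\bigr]$, since the additive constant passes through the expectation unchanged ($\mu_1$ is a probability measure). Writing $k:=h_1-h_2$, the key step is to evaluate $E_{\mu_1}[k^\ast]$. Here I would use the pushforward description of $\mu_1$: because $\mu_1=T_{h_1}^\ast(\mu_0)$, a variable $x\sim\mu_1$ has the form $x=\omega+h_1$ with $\omega\sim\mu_0$, so $E_{\mu_1}[k^\ast(x)]=E_{\mu_0}[k^\ast(\omega+h_1)]$. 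The functional $k^\ast$ is (measurably) linear, hence $k^\ast(\omega+h_1)=k^\ast(\omega)+k^\ast(h_1)$; the first term has zero mean because $\mu_0$ is centered, and for the second I would invoke the reproducing identity $k^\ast(h_1)=\langle h_1,k\rangle_{\mu_0}$, which follows from the defining property $C_{\mu_0}(h^\ast,\ell)=\ell(h)$ of the Cameron--Martin space (taken with $h=h_1$, $\ell=k^\ast$) together with the definition $\langle h_1,k\rangle_{\mu_0}=C_{\mu_0}(h_1^\ast,k^\ast)$ of the inner product. This yields $E_{\mu_1}[k^\ast]=\langle h_1,h_1-h_2\rangle_{\mu_0}$.

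Substituting back and expanding, I would obtain
\[
D_{KL}(\mu_1\|\mu_2)=\langle h_1,h_1-h_2\rangle_{\mu_0}-\tfrac12\bigl(\|h_1\|_{\mu_0}^2-\|h_2\|_{\mu_0}^2\bigr)
=\tfrac12\|h_1\|_{\mu_0}^2-\langle h_1,h_2\rangle_{\mu_0}+\tfrac12\|h_2\|_{\mu_0}^2,
\]
which is exactly $\tfrac12\|h_1-h_2\|_{\mu_0}^2$ after recognizing the right-hand side as the expansion of that squared norm. This completes the computation.

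The main obstacle I anticipate is not the algebra but the justification of the functional-analytic manipulations when $h_1,h_2$ lie in the completion $\mathcal H_{\mu_0}$ rather than in the dense subspace $\mathring{\mathcal H}_{\mu_0}$. For such $h$, the object $h^\ast$ is not literally an element of $\mathcal B^\ast$ but rather the corresponding element of the first Wiener chaos, i.e. the $L^2(\mu_0)$-limit of $\ell_n\in\mathcal B^\ast$ with $\ell_n\to h^\ast$; one must check that it is a well-defined and $\mu_0$- (hence $\mu_1$-) integrable measurable linear functional, that the identities $k^\ast(\omega+h_1)=k^\ast(\omega)+k^\ast(h_1)$ and $k^\ast(h_1)=\langle h_1,k\rangle_{\mu_0}$ persist in the limit, and that $E_{\mu_0}[k^\ast]=0$ survives the passage from $\mathcal B^\ast$ to its $L^2(\mu_0)$-closure (it does, since $L^1$-convergence is controlled by the $L^2(\mu_0)$-convergence defining the Paley--Wiener integral). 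Establishing these continuity and extension facts is the crux; once they are in place, the remaining steps are routine.
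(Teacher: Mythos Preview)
Your argument is correct: obtaining $d\mu_1/d\mu_2$ from the Cameron--Martin densities, reducing the KL-divergence to the single expectation $E_{\mu_1}[(h_1-h_2)^\ast]$, and evaluating the latter via the pushforward description together with the reproducing identity $k^\ast(h_1)=\langle h_1,k\rangle_{\mu_0}$ yields exactly $\tfrac12\|h_1-h_2\|_{\mu_0}^2$. Your caveat about the passage from $\mathring{\mathcal H}_{\mu_0}$ to its completion is well placed and the resolution you sketch (via the $L^2(\mu_0)$-closure/Paley--Wiener integral) is the standard one.

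There is nothing to compare against in the paper itself: Theorem~\ref{theorem:KL-for-shift} is stated with a citation to \cite[Lemma~3.20]{Dan-KLD-CM} and no proof is given here. Your write-up would serve as a self-contained proof of the quoted result.
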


Next we define classical Wiener space associated to a standard Brownian motion. This is our first specific example of a Banach space with a Gaussian measure, and later in the paper we obtain some more detailed results for this special setting.

\begin{definition}\label{def:Wiener-space}
Let $\mathcal B=\mathcal C_0[0,T]$ be the set of continuous functions $f:[0,T]\to \mathbb R$ equipped with the supremum norm $\|\cdot\|_\infty$, such that $f(0)=0$. Let $\mu_0$ be the Borel measure on $\mathcal C_0[0,T]$ associated to the standard Brownian motion $B(t)$. Then $(\mathcal C_0[0,T], \mu_0)$ is \textbf{classical Wiener space}. 
\end{definition}

\begin{proposition}\label{prop:GM-for-WS}\cite[Exercise 3.27]{hairer2009introduction} 
Let $\mu_0$ be the Borel measure on $\mathcal C_0[0,T]$ associated to a standard Brownian motion $B(t)$.

(i) The measure $\mu_0$ is a centered Gaussian measure with covariance operator $C_{\mu_0}(\delta_s,\delta_t)=\min\{s,\, t\}$.

(ii) The Cameron-Martin space associated to $\mu_0$ is the Sobolev space $$W_0^{1,2}:=\left\{F:[0,T]\to \mathbb R: f(0)=0\text{ and } \exists f\in L^2[0,T] \text{ so that } F(t)=\int_0^t f(s) ds\right\}.$$ Furthermore, the Cameron-Martin norm $\|\cdot\|_{\mu_0}$ is the Sobolev norm
\begin{equation*}
    \|F\|_{\mu_0}=\int_0^T f^2(s) ds.
\end{equation*}
\end{proposition}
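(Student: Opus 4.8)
The plan is to take for granted the existence of $\mu_0$ as a Borel probability measure on $\mathcal C_0[0,T]$ (Wiener's theorem) together with the defining finite-dimensional properties of Brownian motion: $B(0)=0$, independent increments, and $B(t)-B(s)\sim N(0,t-s)$ for $s\le t$. For part (i) I would first identify the dual space: by the Riesz representation theorem every $\ell\in\mathcal B^\ast$ has the form $\ell(x)=\int_0^T x(t)\,\nu(dt)$ for a finite signed Borel measure $\nu$, and in particular the point evaluations $\delta_t\colon x\mapsto x(t)$ lie in $\mathcal B^\ast$ and correspond to point masses. To show $\mu_0$ is Gaussian in the sense of Definition \ref{def:Gaussian-measure}, I would fix such an $\ell$ and observe that $\ell(B)=\int_0^T B(t)\,\nu(dt)$ is an almost-sure limit of Riemann sums $\sum_i B(t_i)\,\nu(\Delta_i)$, each of which is a finite linear combination of the jointly Gaussian variables $B(t_i)$ and hence Gaussian; since a limit in distribution of one-dimensional Gaussians is Gaussian, $\ell^\ast(\mu_0)$ is Gaussian. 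Centering is immediate from $E[B(t)]=0$ and linearity. For the covariance, for $s\le t$ I would write $E[B(s)B(t)]=E[B(s)(B(t)-B(s))]+E[B(s)^2]$ and use independence of increments with $\operatorname{Var}(B(s))=s$ to obtain $E[B(s)B(t)]=\min\{s,t\}$, which is exactly $C_{\mu_0}(\delta_s,\delta_t)$ by Definition \ref{def:cov-operator}.

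For part (ii) the key is to unwind Definition \ref{def:CM-space}. Writing $h^\ast\in\mathcal B^\ast$ as a signed measure $\nu$, the defining condition $C_{\mu_0}(h^\ast,\delta_t)=\delta_t(h)=h(t)$ for all $t$ becomes $h(t)=\int_0^T \min\{s,t\}\,\nu(ds)$. Differentiating in $t$ (using $\tfrac{\partial}{\partial t}\min\{s,t\}=\mathbf 1_{\{t<s\}}$) gives $h'(t)=\nu((t,T])$, so $h$ is absolutely continuous with $h(0)=0$, and $h'$ is bounded (hence in $L^2$). To compute the norm I would use the elementary identity $\min\{s,t\}=\int_0^T \mathbf 1_{\{u<s\}}\mathbf 1_{\{u<t\}}\,du$ and Fubini to obtain
$$
\|h\|_{\mu_0}=C_{\mu_0}(h^\ast,h^\ast)=\int_0^T \nu((u,T])^2\,du=\int_0^T h'(u)^2\,du .
$$
Thus on $\mathring{\mathcal H}_{\mu_0}$ the map $h\mapsto h'$ is an isometry onto its image when $W_0^{1,2}$ carries the $L^2$ norm of the derivative, matching the claimed formula under the paper's convention that $\|\cdot\|_{\mu_0}$ denotes $\langle\cdot,\cdot\rangle_{\mu_0}$.

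Finally I would carry out the completion. The derivatives $h'(u)=\nu((u,T])$ range over right-continuous functions of bounded variation; this class contains all step functions $\sum_i c_i\mathbf 1_{(u_i,u_{i+1}]}$ (take $\nu$ atomic) and is therefore dense in $L^2[0,T]$, so completing $\mathring{\mathcal H}_{\mu_0}$ in the Cameron-Martin norm amounts to completing these derivatives in $L^2$, yielding all of $L^2[0,T]$. To realize the abstract completion concretely inside $\mathcal C_0[0,T]$, I would note that Cauchy-Schwarz gives $|h_n(t)-h_m(t)|\le\sqrt T\,C_{\mu_0}\big((h_n-h_m)^\ast,(h_n-h_m)^\ast\big)^{1/2}$, so a sequence Cauchy in the Cameron-Martin metric is uniformly Cauchy; its limit $h$ then satisfies $h(t)=\int_0^t f(s)\,ds$ with $f\in L^2$ the $L^2$-limit of the $h_n'$, and $\|h\|_{\mu_0}=\int_0^T f^2$. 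This identifies $\mathcal H_{\mu_0}$ with $W_0^{1,2}$ and gives the stated norm.

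I expect the covariance computation and the Gaussianity argument to be routine; the main obstacle is the completion step in part (ii), namely verifying that the abstract metric completion of $\mathring{\mathcal H}_{\mu_0}$ coincides with the concrete space $W_0^{1,2}\subset\mathcal C_0[0,T]$. This requires both the density of bounded-variation derivatives in $L^2$ and the continuity of the embedding $\mathcal H_{\mu_0}\hookrightarrow\mathcal B$, so that $\|\cdot\|_{\mu_0}$-limits are genuine continuous functions with the asserted integral representation rather than merely abstract equivalence classes.
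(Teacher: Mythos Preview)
The paper does not supply its own proof of this proposition; it is stated with the citation \cite[Exercise 3.27]{hairer2009introduction} and left as a quoted background result, so there is nothing to compare your argument against. Your proposal is correct and is the standard way to do this exercise: show Gaussianity by writing a general $\ell\in\mathcal B^\ast$ as a signed measure and approximating $\ell(B)$ by finite linear combinations of point evaluations, read off the covariance from the increment structure, identify $\mathring{\mathcal H}_{\mu_0}$ via the kernel $\min\{s,t\}=\int_0^T\mathbf 1_{\{u<s\}}\mathbf 1_{\{u<t\}}\,du$, and then complete. The two points you flagged as the only real work are indeed the only subtleties, and you handle them correctly: the dual of $\mathcal C_0[0,T]$ is represented by signed measures (non-uniquely, up to a multiple of $\delta_0$, which is harmless since $x(0)=0$), and your Cauchy--Schwarz estimate $|h(t)|\le\sqrt{T}\,C_{\mu_0}(h^\ast,h^\ast)^{1/2}$ shows the embedding $\mathcal H_{\mu_0}\hookrightarrow\mathcal C_0[0,T]$ is continuous, so the abstract completion is realized concretely inside $\mathcal C_0[0,T]$ as $W_0^{1,2}$.
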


Now we collect some results from stochastic analysis and Malliavin calculus. We refer the reader to \cite{NuaBook,ProtterBook} for more details.

\begin{theorem}\label{theorem:Girsanov1}\cite[Lemma 5.76]{Baudoin}
Let $(\mathcal C_0[0,T], \mu_0)$ be classical Wiener space, and assume:

(a) $F$ is a progressively measurable process with respect to the filtration generated by the standard Brownian motion $B(t)$;

(b) The sample paths are almost surely in the Cameron-Martin space $W_0^{1,2}$;

(c) Novikov's condition,
\begin{equation*}
    E_{\mu_0}\left[\exp \left(\int_0^T f^2(s) ds\right)\right]<\infty,
\end{equation*}
where $f(s) = F'(s)$, holds.

Then, the process $B(t)-F(t)$ is a standard Brownian motion under $\mu$ with density
$$\frac{d\mu}{d\mu_0}=\exp\left(\int_0^T f(s)dB(s)-\frac{1}{2}\int_0^T f^2(s) ds\right).$$
\end{theorem}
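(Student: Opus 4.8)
The plan is to recognize this statement as the Cameron--Martin--Girsanov theorem and to prove it along the classical route: build the candidate Radon--Nikodym density as an exponential (Dol\'eans--Dade) martingale, use Novikov's condition to promote it from a local martingale to a genuine martingale so that $\mu$ is a bona fide probability measure, and then identify $B(t)-F(t)$ as a $\mu$-Brownian motion through L\'evy's characterization.

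First I would set $Z(t):=\exp\left(\int_0^t f(s)\,dB(s)-\frac{1}{2}\int_0^t f^2(s)\,ds\right)$. Hypothesis (a) makes $f$ progressively measurable, and (c) forces $\int_0^T f^2(s)\,ds<\infty$ $\mu_0$-a.s. (a nonnegative integrand with finite expectation is a.s. finite), so the It\^o integral $\int_0^t f\,dB$ is well-defined; by (b), $F(t)=\int_0^t f(s)\,ds$ is absolutely continuous and of bounded variation. An application of It\^o's formula then gives $dZ(t)=Z(t)f(t)\,dB(t)$, exhibiting $Z$ as a $\mu_0$-local martingale with $Z(0)=1$.

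The crucial step, which I expect to be the main obstacle, is to upgrade $Z$ from a local martingale to a true martingale. A priori $Z$ is only a nonnegative supermartingale, so $E_{\mu_0}[Z(T)]$ could be strictly less than $1$, in which case $d\mu/d\mu_0:=Z(T)$ would fail to define a probability measure. This is precisely what Novikov's condition (c) rules out; indeed the stated hypothesis $E_{\mu_0}[\exp(\int_0^T f^2\,ds)]<\infty$ is stronger than the usual Novikov bound $E_{\mu_0}[\exp(\frac{1}{2}\int_0^T f^2\,ds)]<\infty$, and either form suffices to conclude, via the standard localization-and-uniform-integrability argument, that $Z$ is a uniformly integrable martingale on $[0,T]$ with $E_{\mu_0}[Z(T)]=1$. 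Hence $\mu$ defined by $d\mu/d\mu_0=Z(T)$ is a probability measure equivalent to $\mu_0$, which yields the claimed density.

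Finally I would show $\tilde B(t):=B(t)-F(t)$ is a standard Brownian motion under $\mu$ by L\'evy's characterization. Since $\mu\sim\mu_0$, quadratic variation is preserved and $\langle \tilde B\rangle_t=\langle B\rangle_t=t$, while $\tilde B$ is continuous with $\tilde B(0)=0$. It remains to verify $\tilde B$ is a $\mu$-martingale, for which I invoke the Bayes-rule criterion that a process $M$ is a $\mu$-(local) martingale if and only if $MZ$ is a $\mu_0$-(local) martingale. Computing by It\^o's product rule, and using that the covariation $\langle \tilde B,Z\rangle_t=\langle B,Z\rangle_t=\int_0^t Z(s)f(s)\,ds$ while the bounded-variation part $F$ contributes nothing, gives $d(\tilde B Z)=(\tilde B Z f+Z)\,dB$, a $\mu_0$-local martingale with vanishing drift. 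Thus $\tilde B$ is a $\mu$-local martingale, and L\'evy's theorem then identifies it as a $\mu$-standard Brownian motion, completing the proof.
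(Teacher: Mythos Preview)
Your proof is correct and follows the classical Girsanov argument (Dol\'eans--Dade exponential, Novikov for uniform integrability, L\'evy's characterization). Note, however, that the paper does not actually supply a proof of this statement: Theorem~\ref{theorem:Girsanov1} is quoted as a known result from \cite[Lemma 5.76]{Baudoin} and is used as a black box throughout, so there is no in-paper proof to compare against. Your write-up would serve perfectly well as a self-contained justification of the cited result.
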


\begin{theorem}\label{theorem:Girsanov2} \cite[Theorem 5.72]{Baudoin}
Let $(\mathcal C_0[0,T], \mu_0)$ be classical Wiener space, and suppose $\mu\sim\mu_0$. Then, there exists a progressively measurable process $F(t)$ with sample paths almost surely in the Cameron-Martin space $W_0^{1,2}$ so that the process $B(t)-F(t)$ is a standard Brownian motion under $\mu$. Furthermore, the density is given by
$$\frac{d\mu}{d\mu_0}=\exp\left(\int_0^T f(s)dB(s)-\frac{1}{2}\int_0^T f^2(s) ds\right),$$
where $f(s) = F'(s)$.
\end{theorem}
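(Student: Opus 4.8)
The plan is to reconstruct the drift $F$ directly from the Radon--Nikodym density by means of the Brownian martingale representation theorem and a Dol\'eans--Dade exponential identity, and then to read off that $B-F$ is a $\mu$-Brownian motion. Since $\mu\sim\mu_0$, the density $Z:=\frac{d\mu}{d\mu_0}$ exists, is strictly positive $\mu_0$-a.s., and satisfies $E_{\mu_0}[Z]=1$. First I would pass to the augmented filtration $(\mathcal F_t)$ generated by the canonical Brownian motion $B$ and set $Z_t:=E_{\mu_0}[Z\mid\mathcal F_t]$. On classical Wiener space $\mathcal F_T$ agrees with the Borel $\sigma$-algebra up to $\mu_0$-null sets, so $Z_T=Z$; moreover $Z_t$ is a uniformly integrable (hence \emph{true}) martingale simply because it is the conditional expectation of an integrable random variable, a point that will matter below.

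The key step is the martingale representation theorem on Wiener space: the integrable martingale $Z_t$ admits a representation as a stochastic integral against $B$, so there is a progressively measurable process $H$ with $\int_0^T H_s^2\,ds<\infty$ $\mu_0$-a.s. and
$$
Z_t = 1 + \int_0^t H_s\,dB(s).
$$
Because $\mu\sim\mu_0$ we have $Z_T>0$ a.s., and since $Z_t=E_{\mu_0}[Z_T\mid\mathcal F_t]$ the continuous martingale $Z_t$ stays strictly positive (indeed bounded away from zero on the compact interval $[0,T]$) $\mu_0$-a.s. This licenses defining the logarithmic derivative $f_s:=H_s/Z_s$, again progressively measurable, and applying It\^o's formula to $\log Z_t$:
$$
d(\log Z_t)=\frac{dZ_t}{Z_t}-\frac12\frac{d\langle Z\rangle_t}{Z_t^2}=f_s\,dB(s)-\frac12 f_s^2\,ds.
$$
Exponentiating and evaluating at $t=T$ yields exactly the claimed density
$$
\frac{d\mu}{d\mu_0}=\exp\left(\int_0^T f(s)\,dB(s)-\frac12\int_0^T f^2(s)\,ds\right).
$$
Setting $F(t):=\int_0^t f(s)\,ds$, the strict positivity of $Z_s$ together with $\int_0^T H_s^2\,ds<\infty$ gives $\int_0^T f^2(s)\,ds<\infty$ a.s., so $f\in L^2[0,T]$ a.s. and, by Proposition \ref{prop:GM-for-WS}, the sample paths of $F$ lie in the Cameron--Martin space $W_0^{1,2}$ almost surely, with $f=F'$.

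It remains to conclude that $B-F$ is a standard Brownian motion under $\mu$. Here I would invoke the general Girsanov change-of-measure result (in the references \cite{NuaBook,ProtterBook}): having expressed $Z_t$ as the stochastic exponential of $f\cdot B$ and already knowing that $Z_t$ is a genuine, not merely local, martingale of total mass one, the change of measure $d\mu=Z_T\,d\mu_0$ transforms $B(t)-\int_0^t f(s)\,ds=B(t)-F(t)$ into a $\mu$-Brownian motion. Note that I cannot apply Theorem \ref{theorem:Girsanov1} verbatim, since Novikov's condition (c) need not hold for the $f$ produced above; the martingale property, which is what Girsanov actually requires, is instead supplied for free by the construction $Z_t=E_{\mu_0}[Z\mid\mathcal F_t]$.

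The main obstacle is precisely the martingale representation step together with the accompanying positivity and integrability bookkeeping. Equivalence $\mu\sim\mu_0$ (rather than mere absolute continuity) is used twice in an essential way: once to guarantee the strict positivity of $Z_t$ that makes the passage to $\log Z_t$ and the well-definedness of $f=H/Z$ legitimate, and once implicitly to ensure that the representation produces a genuine change of measure. The delicate point is that martingale representation only delivers $\int_0^T H_s^2\,ds<\infty$ a.s. (local $L^2$), not a bounded or square-integrable integrand, so no Novikov-type estimate is available; one must therefore establish the true-martingale property of $Z_t$ directly from its definition as a conditional expectation, which is what makes the final Girsanov conclusion valid without any auxiliary integrability assumption.
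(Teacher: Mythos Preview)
The paper does not supply its own proof of this statement: Theorem~\ref{theorem:Girsanov2} is a preliminary result quoted verbatim from \cite[Theorem 5.72]{Baudoin}, with no argument given beyond the citation. So there is no in-paper proof to compare against.

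Your argument is correct and is essentially the standard textbook derivation one finds in Baudoin or in \cite{Revuz,ProtterBook}: represent the closed positive martingale $Z_t=E_{\mu_0}[d\mu/d\mu_0\mid\mathcal F_t]$ via the Brownian martingale representation theorem, use equivalence $\mu\sim\mu_0$ to secure strict positivity of $Z_t$ uniformly on $[0,T]$, pass to $\log Z_t$ by It\^o to exhibit $Z$ as a Dol\'eans--Dade exponential $\mathcal E(\int f\,dB)$, and then invoke Girsanov with the true-martingale property coming for free from the conditional-expectation construction rather than from Novikov. You have correctly flagged the two places where $\mu\sim\mu_0$ (as opposed to $\mu\ll\mu_0$) is genuinely needed, and the observation that Theorem~\ref{theorem:Girsanov1} cannot be applied as a black box because Novikov may fail is exactly the right caveat. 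One small sharpening: the claim that $\inf_{t\le T}Z_t>0$ a.s.\ is most cleanly justified by noting that a continuous nonnegative supermartingale is absorbed at zero, so $Z_T>0$ a.s.\ forces $Z_t>0$ for all $t\le T$ a.s.; you allude to this but could state it explicitly, since the pathwise lower bound is what makes $\int_0^T f^2\,ds<\infty$ follow from $\int_0^T H^2\,ds<\infty$.
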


\begin{remark}
Theorem \ref{theorem:Girsanov2} is a partial converse to Theorem \ref{theorem:Girsanov1}. We had to assume Novikov's condition holds to prove the ``forward" direction in Theorem \ref{theorem:Girsanov1}. There are weaker sufficient conditions, such as Kazamaki's condition (see e.g. \cite[P. 331]{Revuz}). However in the ``reverse" direction in Theorem \ref{theorem:Girsanov2}, the conclusion only establishes that the sample paths lie almost surely in the Cameron-Martin space $W_0^{1,2}$. This condition is weaker than both Novikov's condition and Kazamaki's condition. 
\end{remark}

\begin{remark}
By the Doob-Dynkin Lemma, the progressively measurable process $f$ that appears in Theorem \ref{theorem:Girsanov1} and Theorem \ref{theorem:Girsanov2} has implicit dependence on the underlying Brownian motion, i.e., $f(s) = f(s,\, B(s))$ for all $s \in [0,T]$. We usually suppress this dependence for cleaner notation except where it is needed explicitly.
\end{remark}
In light of Theorem \ref{theorem:Girsanov2} we make the following definition.
\begin{definition}\label{def:corresponds}
Let $\mu \sim \mu_0$ be a Borel measure on classical Wiener space $(C_0[0,T],  \mu_0)$. We say that the progressively measurable process $F(t)$ \textbf{corresponds to} the measure $\mu$ if $$\tilde B(t):=B(t)-F(t)$$ is a Brownian motion under $\mu$.
\end{definition}

\begin{theorem}\label{theorem:Ito-rep}\cite[Theorem 5.55]{Baudoin}
Let $(\mathcal C_0[0,T], \mu_0)$ be classical Wiener space, and let $C:\mathcal C_0[0,T]\to \mathbb R$ be a functional such that $E_{\mu_0}[C^2]<\infty$. Then, there exists a progressively measurable process $f(t)$ such that 
$$C=E_{\mu_0}[C]+\int_0^T f(t)dB(t).$$
\end{theorem}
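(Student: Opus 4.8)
The plan is to prove the representation by an $L^2$ density argument built on stochastic exponentials, which is the classical route to martingale representation and requires only the hypothesis $E_{\mu_0}[C^2]<\infty$. Throughout I identify the Borel $\sigma$-algebra of $\mathcal C_0[0,T]$ under $\mu_0$ with the Brownian filtration $\mathcal F_T=\sigma(B(s):s\le T)$, so that $C$ is $\mathcal F_T$-measurable and $C\in L^2(\mathcal C_0[0,T],\mathcal F_T,\mu_0)$. First I would introduce the linear subspace
$$\mathcal{M}:=\left\{c+\int_0^T f(t)\,dB(t)\ :\ c\in\mathbb{R},\ f\text{ progressively measurable with }E_{\mu_0}\left[\int_0^T f^2\,dt\right]<\infty\right\}$$
of $L^2(\mathcal F_T,\mu_0)$. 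Because stochastic integrals have mean zero, the Itô isometry gives $\|c+\int_0^T f\,dB\|_{L^2}^2=c^2+E_{\mu_0}[\int_0^T f^2\,dt]$, so $(c,f)\mapsto c+\int_0^T f\,dB$ is an isometry from the complete space $\mathbb{R}\oplus L^2_{\mathrm{adapted}}$ into $L^2$; its image $\mathcal M$ is therefore a closed subspace. Since $C\in L^2$, it suffices to prove $\mathcal M=L^2(\mathcal F_T)$, and as $\mathcal M$ is closed this reduces to showing $\mathcal M$ is dense, i.e. $\mathcal M^{\perp}=\{0\}$.

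The second step exhibits a rich family inside $\mathcal M$. For deterministic $h\in L^2[0,T]$ consider the Doléans--Dade exponential
$$\mathcal{E}_t(h):=\exp\left(\int_0^t h(s)\,dB(s)-\frac12\int_0^t h^2(s)\,ds\right).$$
Itô's formula yields $d\mathcal{E}_t(h)=\mathcal{E}_t(h)\,h(t)\,dB(t)$, hence
$$\mathcal{E}_T(h)=1+\int_0^T \mathcal{E}_t(h)\,h(t)\,dB(t),$$
and for bounded deterministic $h$ the integrand is square-integrable, so $\mathcal{E}_T(h)\in\mathcal M$. Specializing to step functions $h=\sum_j\lambda_j\mathbf{1}_{(t_j,t_{j+1}]}$, the variable $\mathcal{E}_T(h)$ equals a deterministic positive constant times $\exp(\sum_j\lambda_j(B(t_{j+1})-B(t_j)))$.

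The third and crucial step is totality: any $Z\in L^2$ orthogonal to every $\mathcal{E}_T(h)$ must vanish. By the previous step, orthogonality to all step-function exponentials is equivalent to $E_{\mu_0}[Z\exp(\sum_j\lambda_j(B(t_{j+1})-B(t_j)))]=0$ for all real coefficients $\lambda_j$ and all partitions. Fixing a partition $t_1<\dots<t_n$, the map $\lambda\mapsto E_{\mu_0}[Z\exp(\sum_j\lambda_j(B(t_{j+1})-B(t_j)))]$ extends to an entire function on $\mathbb{C}^n$, since Cauchy--Schwarz together with the exponential integrability of Gaussian increments bounds $E_{\mu_0}[|Z|\exp(\sum_j a_j(B(t_{j+1})-B(t_j)))]\le\|Z\|_{L^2}\,\|\exp(\sum_j a_j(B(t_{j+1})-B(t_j)))\|_{L^2}<\infty$ for every real vector $a$. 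This entire function vanishes on $\mathbb{R}^n$, a uniqueness set, hence vanishes on all of $\mathbb{C}^n$, in particular on the imaginary axes; thus the characteristic function of the finite-dimensional pushforward of the signed measure $Z\,d\mu_0$ is identically zero, forcing $E_{\mu_0}[Z\mathbf{1}_A]=0$ for every $A\in\sigma(B(t_1),\dots,B(t_n))$. As the partition varies, these cylinder $\sigma$-algebras generate $\mathcal F_T$, so a monotone class argument gives $E_{\mu_0}[Z\mathbf{1}_A]=0$ for all $A\in\mathcal F_T$, whence $Z=0$ $\mu_0$-a.s. Therefore $\mathcal M^{\perp}=\{0\}$, and with closedness $\mathcal M=L^2(\mathcal F_T)$, which is exactly the claimed representation of $C$ with progressively measurable $f$. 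I expect the main obstacle to be precisely this last step, namely the analytic-continuation argument that upgrades the real exponential moments into the complex characteristic function that determines the measure, together with the bookkeeping that the finite-dimensional cylinder $\sigma$-algebras generate $\mathcal F_T$; a Malliavin-calculus alternative via the Clark--Ocone formula would identify $f(t)=E_{\mu_0}[D_tC\mid\mathcal F_t]$ but demands differentiability of $C$, so the density argument above is preferable for the bare $L^2$ hypothesis stated.
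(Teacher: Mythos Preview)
Your proof is correct and follows the classical route to the It\^o representation theorem via the $L^2$-density of stochastic exponentials (as in, e.g., Revuz--Yor or Karatzas--Shreve). Note, however, that the paper does not supply its own proof of this statement: Theorem~\ref{theorem:Ito-rep} is quoted as a standard result with a citation to \cite[Theorem 5.55]{Baudoin}, and is used only as a tool in the subsequent development. So there is no ``paper's proof'' to compare against---you have written out in full a result the authors simply import. Your argument is self-contained and uses only the stated $L^2$ hypothesis, whereas the Clark--Ocone alternative you mention (and which the paper invokes separately as Theorem~\ref{theorem:Clark-Ocone}) would indeed require the stronger assumption $C\in\mathbb D^{1,2}$.
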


Theorem \ref{theorem:Ito-rep} is an existence result. A natural follow-up question is how to compute the process $f(t)$ that appears in the statement of Theorem \ref{theorem:Ito-rep}. The Clark-Ocone theorem gives a computational version of Theorem \ref{theorem:Ito-rep}, stated in terms of the Malliavin derivative of $C$. We refer to \cite{NuaBook} for more information on Malliavin calculus.

\begin{theorem}\label{theorem:Clark-Ocone}
\cite[Proposition 1.3.14]{NuaBook} Let $C:\mathcal C_0[0,T]\to \mathbb R$ be in the Sobolev-Watanabe space $\mathbb D^{1,2}$ (see \cite[page 27]{NuaBook} for the definition), then 
\begin{equation*}
    C=E_{\mu_0}[C]+\int_0^T E_{\mu_0}[D_t C \mid \mathcal F_t]dB(t),
\end{equation*}
where $D_t$ is the Malliavin derivative and $\mathcal F_t$ is the filtration generated by Brownian motion. 
\end{theorem}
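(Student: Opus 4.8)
The plan is to deduce this formula from the Itô representation theorem (Theorem \ref{theorem:Ito-rep}) together with the integration-by-parts (duality) formula of Malliavin calculus. Since $C \in \mathbb{D}^{1,2} \subset L^2(\mu_0)$, Theorem \ref{theorem:Ito-rep} furnishes a progressively measurable process $u \in L^2([0,T]\times \mathcal{C}_0[0,T])$ with
\[
C = E_{\mu_0}[C] + \int_0^T u(t)\, dB(t).
\]
The entire content of the theorem is the identification $u(t) = E_{\mu_0}[D_t C \mid \mathcal{F}_t]$, so the task reduces to pinning down this adapted integrand. I would do this weakly, by testing both candidate integrands against an arbitrary adapted process and showing the resulting $L^2$ inner products agree.

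Concretely, fix an adapted process $h \in L^2([0,T]\times \mathcal{C}_0[0,T])$ and evaluate $E_{\mu_0}\!\left[C \int_0^T h(t)\,dB(t)\right]$ in two ways. On one hand, substituting the Itô representation of $C$, the constant $E_{\mu_0}[C]$ contributes nothing (the Itô integral is centered), and the Itô isometry gives
\[
E_{\mu_0}\!\left[C \int_0^T h(t)\,dB(t)\right] = E_{\mu_0}\!\left[\int_0^T u(t) h(t)\, dt\right].
\]
On the other hand, for an adapted integrand the Itô integral coincides with the Skorokhod integral (the divergence operator $\delta$), so $\int_0^T h(t)\, dB(t) = \delta(h)$, and the Malliavin duality formula $E_{\mu_0}[F\,\delta(h)] = E_{\mu_0}\big[\int_0^T D_t F \cdot h(t)\, dt\big]$ applied with $F = C$ yields
\[
E_{\mu_0}\!\left[C\,\delta(h)\right] = E_{\mu_0}\!\left[\int_0^T D_t C \cdot h(t)\, dt\right].
\]

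Equating the two expressions and using that $h(t)$ is $\mathcal{F}_t$-measurable, I would pull $h(t)$ inside a conditional expectation and invoke the tower property to replace $D_t C$ by $E_{\mu_0}[D_t C \mid \mathcal{F}_t]$ on the right-hand side, obtaining
\[
E_{\mu_0}\!\left[\int_0^T u(t) h(t)\, dt\right] = E_{\mu_0}\!\left[\int_0^T E_{\mu_0}[D_t C \mid \mathcal{F}_t]\, h(t)\, dt\right].
\]
Since this holds for every adapted $h \in L^2([0,T]\times \mathcal{C}_0[0,T])$, and both $u$ and $E_{\mu_0}[D_t C \mid \mathcal{F}_t]$ are themselves adapted elements of this Hilbert space, the two integrands must coincide $dt\otimes d\mu_0$-almost everywhere; substituting back into the representation of $C$ gives the claim.

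The main obstacle is justifying the duality formula at the required generality and checking the domain conditions: I must ensure $h \in \operatorname{Dom}(\delta)$ (automatic for adapted $L^2$ integrands), that $C \in \mathbb{D}^{1,2}$ makes $DC$ an honest element of $L^2([0,T]\times \mathcal{C}_0[0,T])$ so that $E_{\mu_0}[D_t C \mid \mathcal{F}_t]$ is well-defined and square-integrable (conditional expectation being an $L^2$-contraction), and that the Itô--Skorokhod coincidence for adapted integrands is valid. A secondary point is the final identification step, which rests on the adapted square-integrable processes forming a closed subspace on which pairing against all such $h$ separates points—a routine Hilbert-space argument once the integrability is secured. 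An alternative route, which I would keep in reserve, proves the identity chaos-by-chaos using the Wiener chaos expansion of $C$ and the action $D_t I_n(f_n) = n\, I_{n-1}(f_n(\cdot,t))$ of the Malliavin derivative on multiple integrals, but the duality argument above is cleaner and avoids handling each chaos separately.
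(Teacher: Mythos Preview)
The paper does not prove this statement at all: Theorem~\ref{theorem:Clark-Ocone} is listed among the preliminaries with a citation to \cite[Proposition 1.3.14]{NuaBook} and no argument is given. Your duality-based proof is correct and is in fact the standard one (it is essentially the proof in Nualart's book), so there is nothing to compare.
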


\section{Information Projection and Onsager-Machlup}\label{sec:OM}
For this section, let $\mathcal B$ be a separable Banach space with centered Gaussian measure $\mu_0$. We formally define the set of Gaussian shift measures to be
\begin{equation}\label{eq:P-set-def}
    \mathcal P:=\{T_h^\ast(\mu_0):h\in \mathcal H_{\mu_0}\},
\end{equation}
where the measure $T_h^\ast(\mu_0)$ is defined before Theorem \ref{theorem:CM}. Now let $\mu^\ast$ be another Borel probability measure on $\mathcal B$ such that $\mu^\ast \sim \mu_0$. Recall the information projection problem 
\begin{equation*}
    \mathbb K:=\inf_{\mu\in \mathcal P} D_{KL}(\mu||\mu^\ast).
\end{equation*}

Problem $\mathbb K$ projects $\mu^\ast$ onto the set of Gaussian shift measures. We are interested in finding the measure $\mu_O$ that attains the optimal value $\operatorname{val}(\mathbb K)$ of Problem $\mathbb K$, when it exists.

We will show that computing the optimizer of Problem $\mathbb K$ is equivalent to minimizing the Onsager-Machlup function corresponding to an associated stochastic process. We now introduce the definition of the Onsager-Machlup function, see~\cite{Stuart-OM,Durr} for further details. 

\begin{definition}
Let $\mu_0$ be a Gaussian measure on a separable Banach space $\mathcal B$ with Cameron-Martin space $\mathcal H_{\mu_0}$. Let $B^\delta(z)\subset \mathcal B$ be the open ball of radius $\delta$ around $z$. Let $\mu$ be another measure that is absolutely continuous with respect to $\mu_0$. If the limit
\begin{equation*}
    \lim_{\delta\to 0}\frac{\mu(B^\delta(z_2))}{\mu(B^\delta(z_1))}=\exp\left(OM(z_1)-OM(z_2)\right)
\end{equation*}
exists for all $z_1,z_2\in \mathcal H_{\mu_0}$, then $OM$ is called the \textbf{Onsager-Machlup}  function for $\mu$. 
\end{definition}
\noindent The Onsager-Machlup function can be viewed as the Lagrangian for the ``most likely'' path of the associated process. In Theorem~\ref{theorem:OM-minimization}, we specifically show that the optimizer of Problem $\mathbb K$ can be identified with an optimal drift/shift function that is the minimizer of an associated Onsager-Machlup function. Crucially, this identification affords a Monte Carlo sampling method (via the associated stochastic process) for solving Problem $\mathbb K$.

In \cite[Theorem 3.2]{Stuart-OM}, the Onsager-Machlup function is computed for a certain class of measures. We recall this result next.

\begin{hypothesis}\label{hypothesis:OM}
Let $\Phi:\mathcal B\to \mathbb R$ be a functional satisfying the following conditions:

(i) For every $\varepsilon>0$ there is an $M\in \mathbb R$ such that
$$\Phi(\omega)\geq M-\varepsilon \|\omega\|^2,$$
for all $\omega\in \mathcal B$.

(ii) $\Phi$ is locally bounded above, i.e., for every $r>0$ there is a $K=K(r)>0$ such that
$$\Phi(\omega)\leq K,$$
for all $\omega\in\mathcal B$ with $\|\omega\|<r$.

(iii) $\Phi$ is locally Lipschitz continuous, i.e., for every $r>0$ there exists $L=L(r)>0$ such that
$$|\Phi(\omega_1)-\Phi(\omega_2)|\leq L\|\omega_1-\omega_2\|,$$
for all $\omega_1,\omega_2\in \mathcal B$ with $\|\omega_1\|<r$ and $\|\omega_2\|<r$.
\end{hypothesis}
\begin{remark}
Note that the functional $\Phi$ in Hypothesis \ref{hypothesis:OM} is not necessarily of the form in Theorem \ref{theorem:Portmanteau}, where the latter is given by some real-valued functional $C$. 
\end{remark}

\begin{theorem}\label{theorem:Stuart-OM}
\cite[Theorem 3.2]{Stuart-OM} Let $\Phi:\mathcal B\to \mathbb R$ satisfy Hypothesis \ref{hypothesis:OM}. Let the measure $\mu$ have the density  $$\frac{d\mu}{d\mu_0}=\frac{e^{-\Phi}}{E_{\mu_0}[e^{-\Phi}]}.$$ Then, the OM function is given by
\begin{equation*}
    OM_\Phi(z)=\begin{cases}
    \Phi(z)+\frac{1}{2}\|z\|_{\mu_0}^2 &\text{ if } z\in \mathcal H_{\mu_0}\\
    \infty &\text{ else}
    \end{cases}.
\end{equation*}
\end{theorem}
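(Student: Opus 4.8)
The plan is to compute the small-ball ratio in the definition of the Onsager-Machlup function directly and show that it converges to $\exp\bigl(OM_\Phi(z_1)-OM_\Phi(z_2)\bigr)$ with $OM_\Phi(z)=\Phi(z)+\tfrac12\|z\|_{\mu_0}^2$ on $\mathcal H_{\mu_0}$. Writing $Z:=E_{\mu_0}[e^{-\Phi}]$ for the normalizing constant, the density gives $\mu(B^\delta(z))=Z^{-1}\int_{B^\delta(z)}e^{-\Phi(w)}\mu_0(dw)$, and since $Z$ cancels in the ratio it suffices to analyze $\int_{B^\delta(z)}e^{-\Phi}\,d\mu_0$. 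First I would check that $Z\in(0,\infty)$ so that $\mu$ is a genuine probability measure: Hypothesis \ref{hypothesis:OM}(i) combined with Fernique's theorem bounds $e^{-\Phi}\le e^{-M}e^{\varepsilon\|\cdot\|^2}$, which is $\mu_0$-integrable for $\varepsilon$ small, giving $Z<\infty$, while Hypothesis \ref{hypothesis:OM}(ii) yields $Z>0$ and, more locally, positivity of the small-ball integrals.

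Second, I would peel off the $\Phi$ factor using local Lipschitz continuity. For fixed $z\in\mathcal H_{\mu_0}$ and $\delta<1$, set $r=\|z\|+1$; then Hypothesis \ref{hypothesis:OM}(iii) gives $|\Phi(w)-\Phi(z)|\le L(r)\|w-z\|<L(r)\delta$ for every $w\in B^\delta(z)$, whence
\begin{equation*}
\int_{B^\delta(z)}e^{-\Phi}\,d\mu_0=e^{-\Phi(z)}\,\mu_0(B^\delta(z))\,\bigl(1+O(\delta)\bigr),
\end{equation*}
with $\mu_0(B^\delta(z))>0$ by Hypothesis \ref{hypothesis:OM}(ii) and positivity of the Gaussian measure on balls centered in its support. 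Consequently the small-ball ratio factorizes into a $\Phi$-part tending to $e^{\Phi(z_1)-\Phi(z_2)}$ and a purely Gaussian ratio $\mu_0(B^\delta(z_2))/\mu_0(B^\delta(z_1))$.

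Third, I would compute the Gaussian Onsager-Machlup ratio $\lim_{\delta\to0}\mu_0(B^\delta(z))/\mu_0(B^\delta(0))=e^{-\frac12\|z\|_{\mu_0}^2}$. Shifting by $z$ and applying the Cameron-Martin Theorem \ref{theorem:CM} gives
\begin{equation*}
\mu_0(B^\delta(z))=e^{-\frac12\|z\|_{\mu_0}^2}\int_{B^\delta(0)}e^{-z^\ast(v)}\,\mu_0(dv),
\end{equation*}
so it remains to show the average of $e^{-z^\ast}$ over $B^\delta(0)$ tends to $1$. By symmetry of the centered measure and of the ball, this average equals $\mu_0(B^\delta(0))^{-1}\int_{B^\delta(0)}\cosh\bigl(z^\ast(v)\bigr)\mu_0(dv)\ge1$, and for the matching upper bound I would use $\cosh(x)\le e^{x^2/2}$. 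When $z\in\mathring{\mathcal H}_{\mu_0}$ the functional $z^\ast$ is continuous, so $|z^\ast(v)|\le\|z^\ast\|_{\mathcal B^\ast}\delta$ on $B^\delta(0)$ and the upper bound is at most $e^{\|z^\ast\|_{\mathcal B^\ast}^2\delta^2/2}\to1$. Dividing the limits for $z_1$ and $z_2$ yields the Gaussian ratio, and combining with the second step gives $OM_\Phi=\Phi+\tfrac12\|\cdot\|_{\mu_0}^2$ on $\mathcal H_{\mu_0}$; the value $+\infty$ off $\mathcal H_{\mu_0}$ records the classical fact that $\mu_0(B^\delta(z))/\mu_0(B^\delta(0))\to0$ when $z\notin\mathcal H_{\mu_0}$.

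The hard part will be the third step for a general $z\in\mathcal H_{\mu_0}$, where $z^\ast$ is only a measurable (Paley-Wiener) linear functional and need not be bounded on $B^\delta(0)$, so the clean estimate $|z^\ast(v)|\le\|z^\ast\|_{\mathcal B^\ast}\delta$ fails. I would handle this by density: approximate $z$ by $z_n\in\mathring{\mathcal H}_{\mu_0}$ with $z_n\to z$ in $\mathcal H_{\mu_0}$, split $z^\ast=z_n^\ast+(z-z_n)^\ast$, control the cross term by Cauchy-Schwarz/Hölder on $B^\delta(0)$ together with a uniform-in-$\delta$ bound on $\mu_0(B^\delta(0))^{-1}\int_{B^\delta(0)}e^{-2(z-z_n)^\ast}\,d\mu_0$ (obtained from the one-dimensional Gaussian law of $(z-z_n)^\ast$ under $\mu_0$ and Anderson's inequality), and then interchange the limits $\delta\to0$ and $n\to\infty$. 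Securing that uniform control is the delicate ingredient; alternatively one may simply invoke the classical Gaussian small-ball asymptotics for the Cameron-Martin shift and feed its conclusion into steps two and three.
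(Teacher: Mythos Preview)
The paper does not prove this theorem; it quotes it verbatim from \cite[Theorem~3.2]{Stuart-OM} and uses it as a black box. So there is no ``paper's own proof'' to compare against. That said, your outline is precisely the strategy of the cited reference (Dashti--Law--Stuart--Voss): use Hypothesis~\ref{hypothesis:OM}(i) with Fernique to make $\mu$ well defined, peel off $e^{-\Phi(z)}$ from the small-ball integral via the local Lipschitz bound (iii), and reduce the remaining ratio to the purely Gaussian small-ball asymptotics $\mu_0(B^\delta(z))/\mu_0(B^\delta(0))\to e^{-\tfrac12\|z\|_{\mu_0}^2}$.

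Your identification of the hard point is exactly right: for $z\in\mathcal H_{\mu_0}\setminus\mathring{\mathcal H}_{\mu_0}$ the Paley--Wiener functional $z^\ast$ is only an $L^2(\mu_0)$-limit of continuous functionals, and the naive bound $|z^\ast(v)|\le\|z^\ast\|_{\mathcal B^\ast}\delta$ collapses. Your density-plus-Anderson sketch is in the right spirit, but as written it is not yet a proof: the step ``uniform-in-$\delta$ bound on $\mu_0(B^\delta(0))^{-1}\int_{B^\delta(0)}e^{-2(z-z_n)^\ast}\,d\mu_0$'' is the entire difficulty, and invoking Anderson's inequality by name does not immediately deliver it (one needs a correlation-type statement that conditioning on the symmetric convex event $\{\|v\|<\delta\}$ does not increase $E[\cosh(2(z-z_n)^\ast)]$, which is closer to the Gaussian correlation inequality than to Anderson). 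The cited source handles this via a dedicated technical lemma; your fallback of ``invoke the classical Gaussian small-ball asymptotics'' is honest and, given that the paper itself simply cites the whole theorem, entirely in keeping with how the result is used here.
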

\noindent We introduce the following optimization problem
$$\mathbb M:=\inf_{z\in \mathcal H_{\mu_0}} OM_\Phi(z),$$
which aims to minimize the OM function $OM_\Phi(z)$. Under Hypothesis \ref{hypothesis:OM}, we know that an optimal solution of Problem $\mathbb M$ exists by \cite[Proposition 3.4]{dashti2013map}. See \cite{Durr} for the corresponding result in classical Wiener space. The minimizer of the OM function $OM_\Phi(z)$ (i.e., the solution of Problem $\mathbb M$) is the ``most likely" element of $(\mathcal B, \mu_0)$. 

To express Problem $\mathbb K$ in terms of the OM function, we need a technical lemma from \cite{Bierkens} for functionals $C$ satisfying the following ``finite entropy" hypothesis.
\begin{hypothesis}\label{hypothesis:FE}
We assume that $\mu_0( C<+\infty)>0$ and $E_{\mu_0}[\exp(-C)\,|C|]<+\infty$.
\end{hypothesis}

\begin{lemma}\label{lemma:Bierkens-General}
\cite[Lemma 2.4(ii)]{Bierkens} Let $\mu_0$ be a Gaussian measure on a separable Banach space $\mathcal B$, and let $C:\mathcal B\to \mathbb R$ satisfy Hypothesis \ref{hypothesis:FE}. Define the measure $\mu^\ast$ with density $$\frac{d\mu^\ast}{d\mu_0}=\frac{e^{-C}}{E_{\mu_0}[e^{-C}]}.$$ Then, for $\mu \ll \mu_0$ satisfying $D_{KL}(\mu||\mu_0)<+\infty$ and $E_\mu [(C)^+]<+\infty$,
\begin{equation*}
    D_{KL}(\mu||\mu^\ast)=E_{\mu}[C]+D_{KL}(\mu||\mu_0)-\log E_{\mu_0}[e^{-C}].
\end{equation*}
\end{lemma}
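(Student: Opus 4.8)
The plan is to prove the identity by a direct Radon--Nikodym chain-rule computation: I would write down a pointwise logarithmic identity relating the three densities and then integrate it against $\mu$. First I would record that Hypothesis~\ref{hypothesis:FE} makes the normalizing constant $Z:=E_{\mu_0}[e^{-C}]$ both finite and strictly positive. Finiteness follows by splitting
\[
E_{\mu_0}[e^{-C}]=E_{\mu_0}[e^{-C}\mathbf{1}_{|C|<1}]+E_{\mu_0}[e^{-C}\mathbf{1}_{|C|\ge 1}]\le e+E_{\mu_0}[e^{-C}|C|]<+\infty,
\]
while positivity follows from $\mu_0(C<+\infty)>0$. Consequently the density $\tfrac{d\mu^\ast}{d\mu_0}=e^{-C}/Z$ is strictly positive $\mu_0$-almost everywhere, so $\mu^\ast\sim\mu_0$; combined with the standing assumption $\mu\ll\mu_0$ this gives the tower $\mu\ll\mu_0\ll\mu^\ast$, which legitimizes the chain rule $\tfrac{d\mu}{d\mu^\ast}=\tfrac{d\mu}{d\mu_0}\,\tfrac{d\mu_0}{d\mu^\ast}$ $\mu$-almost everywhere.

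The key algebraic step is to invert the density of $\mu^\ast$: since $\tfrac{d\mu_0}{d\mu^\ast}=\big(\tfrac{d\mu^\ast}{d\mu_0}\big)^{-1}=Z\,e^{C}$, taking logarithms gives, $\mu$-almost everywhere,
\[
\log\frac{d\mu}{d\mu^\ast}=\log\frac{d\mu}{d\mu_0}+C+\log Z.
\]
Integrating this pointwise identity against $\mu$ and using $\mu(\mathcal B)=1$ separates the right-hand side into $D_{KL}(\mu||\mu_0)$, $E_\mu[C]$, and the additive constant $\log Z=\log E_{\mu_0}[e^{-C}]$, the log-normalizing constant of $\mu^\ast$, yielding the asserted decomposition of $D_{KL}(\mu||\mu^\ast)$.

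The main obstacle is not the algebra but the measure-theoretic bookkeeping that makes the termwise integration legitimate, i.e.\ ruling out any $\infty-\infty$ indeterminacy when distributing $E_\mu$ across the three summands. I would handle this as follows. Writing $\rho:=d\mu/d\mu_0$, the elementary bound $\rho\log\rho\ge -e^{-1}$ shows the negative part of $\log\rho$ is $\mu$-integrable, so the hypothesis $D_{KL}(\mu||\mu_0)=E_\mu[\log\rho]<+\infty$ upgrades to $\log\rho\in L^1(\mu)$. Since $\log Z$ is a finite constant and $E_\mu[(C)^+]<+\infty$ by hypothesis, the positive part of $\log\tfrac{d\mu}{d\mu^\ast}=\log\rho+C+\log Z$ is $\mu$-integrable, whence $D_{KL}(\mu||\mu^\ast)$ is finite (it is also nonnegative). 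It then follows that $C=\log\tfrac{d\mu}{d\mu^\ast}-\log\rho-\log Z$ is a difference of $\mu$-integrable functions, so $E_\mu[C]$ is finite and the expectation distributes across the three terms, establishing the identity as an equality of finite real numbers. The only delicate point is the order in which these integrabilities are deduced; everything else is routine linearity of the integral.
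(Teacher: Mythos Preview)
Your argument is correct. The paper does not actually supply a proof of this lemma: it simply quotes the result from \cite[Lemma~2.4(ii)]{Bierkens} and, in the subsequent remark, notes that Hypothesis~\ref{hypothesis:FE} forces $0<E_{\mu_0}[e^{-C}]<+\infty$ so that $\mu^\ast$ is well defined. So there is no in-paper proof to compare against; what you have written is a complete, self-contained version of the standard argument that the cited reference carries.

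A couple of minor remarks on presentation. Your bound $E_{\mu_0}[e^{-C}\mathbf{1}_{|C|\ge 1}]\le E_{\mu_0}[e^{-C}|C|]$ is exactly the estimate the paper alludes to in its remark, and your positivity argument is the same as well, so that portion matches the paper's (brief) discussion verbatim in spirit. In the integrability step, you could shorten the ``$D_{KL}(\mu\|\mu^\ast)<\infty$'' part by noting directly that, with $\rho^\ast:=d\mu/d\mu^\ast$, one always has $E_\mu[(\log\rho^\ast)^-]=E_{\mu^\ast}[\rho^\ast(\log\rho^\ast)^-]\le e^{-1}$, so the negative part is automatically finite; your route via nonnegativity of the KL divergence is equally valid but slightly more roundabout. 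Either way, once $\log\rho\in L^1(\mu)$ and $\log(d\mu/d\mu^\ast)\in L^1(\mu)$ are established, the deduction $C\in L^1(\mu)$ and the termwise splitting of the expectation are exactly as you describe.
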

\begin{remark}
In \cite{Bierkens}, it is shown that Hypothesis \ref{hypothesis:FE} implies $0<E_{\mu_0}[\exp(-C)]<+\infty$, and so the measure $\mu^\ast$ in Lemma \ref{lemma:Bierkens-General} is well defined.
\end{remark}
We can now relate Problem $\mathbb K$ to minimization of an OM function. 
\begin{theorem}\label{theorem:OM-minimization}
Let $\mu_0$ be a centered Gaussian measure on a separable Banach space $\mathcal B$. Let $\mu^\ast$ be another Borel probability measure and assume:

(a) $\mu^\ast \sim \mu_0$.

(b) $\mu^\ast$ has density $$\frac{d\mu^\ast}{d\mu_0}=\frac{e^{-C}}{E_{\mu_0}[e^{-C}]},$$
for a functional $C : \mathcal B \rightarrow \mathbb R$ which satisfies Hypothesis \ref{hypothesis:FE}.

(c) The functional $z \mapsto \Phi(z) :=E_{\mu_0}[C(\omega+z)]$ satisfies Hypothesis \ref{hypothesis:OM}, and there is an associated measure $\tilde \mu$ on $\mathcal B$ with density $$\frac{d\tilde \mu }{d\mu_0}=\frac{e^{-\Phi}}{E_{\mu_0}[e^{-\Phi}]}.$$

Then, Problem $\mathbb K$ has a solution $\mu_O$ where
$\mu_O(\cdot)=\mu_0(\cdot-z^O)$ and $z^O$ is the minimizer of the OM function for $\tilde \mu$ (which always exists by \cite[Prop 3.4]{Stuart-OM}).
\end{theorem}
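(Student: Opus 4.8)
The plan is to show that, along the feasible set $\mathcal P$, the objective $D_{KL}(\mu\|\mu^\ast)$ coincides with $OM_\Phi$ up to an additive constant, so that minimizing one is the same as minimizing the other; the existence of an $OM_\Phi$-minimizer then furnishes the optimal shift measure. First I would use the Cameron-Martin theorem (Theorem~\ref{theorem:CM}) to write every $\mu\in\mathcal P$ as $\mu_z := T_z^\ast(\mu_0)$ for a unique $z\in\mathcal H_{\mu_0}$ --- uniqueness because Theorem~\ref{theorem:KL-for-shift} gives $D_{KL}(\mu_{z_1}\|\mu_{z_2}) = \tfrac12\|z_1-z_2\|_{\mu_0}^2$, which vanishes only when $z_1=z_2$. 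Thus Problem $\mathbb K$ is the minimization of $z\mapsto D_{KL}(\mu_z\|\mu^\ast)$ over $\mathcal H_{\mu_0}$.

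Next I would put $\mu = \mu_z$ into Lemma~\ref{lemma:Bierkens-General}. Its hypotheses hold for every $z\in\mathcal H_{\mu_0}$: $\mu_z\ll\mu_0$ by Theorem~\ref{theorem:CM}; $D_{KL}(\mu_z\|\mu_0) = \tfrac12\|z\|_{\mu_0}^2<\infty$ by Theorem~\ref{theorem:KL-for-shift} (taking the second shift to be $0$); and $E_{\mu_z}[(C)^+]<\infty$ since the shift change-of-variables gives $E_{\mu_z}[C] = E_{\mu_0}[C(\omega+z)] = \Phi(z)$, a finite real number under assumption (c). The lemma then yields
\[
   D_{KL}(\mu_z\|\mu^\ast) = \Phi(z) + \tfrac12\|z\|_{\mu_0}^2 - \log E_{\mu_0}[e^{-C}],
\]
with $\log E_{\mu_0}[e^{-C}]$ finite because Hypothesis~\ref{hypothesis:FE} forces $0 < E_{\mu_0}[e^{-C}] < \infty$. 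Since $\Phi$ satisfies Hypothesis~\ref{hypothesis:OM}, Theorem~\ref{theorem:Stuart-OM} identifies $\Phi(z)+\tfrac12\|z\|_{\mu_0}^2$ with $OM_\Phi(z)$ on $\mathcal H_{\mu_0}$, so $D_{KL}(\mu_z\|\mu^\ast) = OM_\Phi(z) - \log E_{\mu_0}[e^{-C}]$ for all $z\in\mathcal H_{\mu_0}$.

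To conclude, the additive constant is independent of $z$, so
\[
   \mathbb K = \inf_{z\in\mathcal H_{\mu_0}} D_{KL}(\mu_z\|\mu^\ast) = \Big(\inf_{z\in\mathcal H_{\mu_0}} OM_\Phi(z)\Big) - \log E_{\mu_0}[e^{-C}],
\]
and the two problems share their minimizers. Invoking the cited existence result \cite[Prop.\ 3.4]{Stuart-OM} (applicable because $\Phi$ satisfies Hypothesis~\ref{hypothesis:OM}), there is a minimizer $z^O\in\mathcal H_{\mu_0}$ of $OM_\Phi$; setting $\mu_O := T_{z^O}^\ast(\mu_0)$, equivalently $\mu_O(\cdot) = \mu_0(\cdot-z^O)$, gives an element of $\mathcal P$ attaining $\operatorname{val}(\mathbb K)$, as claimed.

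The argument is mostly a matter of threading together three quoted facts --- the Cameron-Martin parametrization of $\mathcal P$, the Bierkens identity (Lemma~\ref{lemma:Bierkens-General}), and Stuart's OM formula (Theorem~\ref{theorem:Stuart-OM}) --- so I do not expect a genuinely hard step. The one place requiring care is the integrability bookkeeping needed to apply Lemma~\ref{lemma:Bierkens-General}: one must verify $E_{\mu_z}[(C)^+]<\infty$ for every admissible shift $z$ (not merely $\mu_0$-a.e.\ finiteness of $C$) and justify the change of variables $E_{\mu_z}[C] = E_{\mu_0}[C(\cdot+z)]$ for a $C$ that is, \emph{a priori}, only $\mu_z$-integrable --- both of which I would extract from the standing hypothesis that $\Phi(z)$ is a well-defined element of $\mathbb R$ for each $z$, together with Hypothesis~\ref{hypothesis:OM}(ii).
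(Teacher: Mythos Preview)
Your proposal is correct and follows essentially the same route as the paper: apply Lemma~\ref{lemma:Bierkens-General} to each shift measure $\mu_z$, use Theorem~\ref{theorem:KL-for-shift} and the definition of $\Phi$ to rewrite $D_{KL}(\mu_z\|\mu^\ast)$ as $OM_\Phi(z)$ plus a constant, and then invoke the cited existence result for $z^O$. The only difference is that you are more explicit than the paper about checking the integrability hypotheses of Lemma~\ref{lemma:Bierkens-General} (the paper applies the lemma without comment), which is a worthwhile addition rather than a departure.
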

\begin{proof}
By Lemma \ref{lemma:Bierkens-General}, we have that 
\begin{equation*}
    \inf_{\mu\in \mathcal P} D_{KL}(\mu||\mu^\ast)=\inf_{\mu\in \mathcal P} \left\{ E_{\mu}[C]+D_{KL}(\mu||\mu_0) \right\}-\log E_{\mu_0}[e^{-C}].
\end{equation*}
Using Theorem \ref{theorem:KL-for-shift} and the definition of $\Phi$ for each $\mu\in \mathcal P$ with corresponding shift $z_\mu\in \mathcal H_{\mu_0}$, we have that 
\begin{align*}
   D_{KL}(\mu||\mu^\ast)
    &=E_{\mu_0}[C(\omega+z_\mu)]+\frac{1}{2}\|z_\mu\|_{\mu_0}^2-\log E_{\mu_0}[e^{-C}]\\
    &=\Phi(z_\mu)+\frac{1}{2}\|z_\mu\|_{\mu_0}^2-\log E_{\mu_0}[e^{-C}]\\
    &= OM_\Phi(z_\mu) - \log E_{\mu_0}[e^{-C}].
\end{align*}
By \cite[Prop 3.4]{Stuart-OM}, the minimizer of Problem $\mathbb M$, denoted $z^O\in \mathcal H_{\mu_0}$, exists. Then, we have the chain of relations
\begin{align*}
    OM_\Phi(z^O) - \log E_{\mu_0}[e^{-C}]&=D_{KL}(\mu_{0}(\cdot -z^O)||\mu^\ast)\\
    &\geq \inf_{\mu\in \mathcal P}D_{KL}(\mu||\mu^\ast)\\
    &=\inf_{z\in \mathcal H_{\mu_0}}OM_\Phi(z) - \log E_{\mu_0}[e^{-C}]\\
    &=OM_\Phi(z^O) - \log E_{\mu_0}[e^{-C}].
\end{align*}
It follows that the optimal value $\operatorname{sol}(\mathbb K)$ exists and also that $\operatorname{sol}(\mathbb K)=\mu_0(\cdot -z^O)$. 
\end{proof}

\begin{remark}
Observe that $z^O$ is interpreted as the most likely element of $(\mathcal{B}, \tilde{\mu}) $. In other words, $z^O$ can be viewed as the mode of $\tilde{\mu}$. Theorem \ref{theorem:OM-minimization} suggests a potential simulation-based method to compute the optimizer of the information projection Problem $\mathbb K$ via Monte-Carlo estimation of the mode of $\tilde \mu$. 
\end{remark}

\subsection{KL-Weighted Optimization on Wiener Space}~\label{sec:rewoc}
We now apply our results to KL-weighted state independent (or ``open loop'') optimal control on classical Wiener space. Let $C: \mathcal C_0[0,T]\to \mathbb R$ be a cost functional satisfying Hypothesis \ref{hypothesis:FE} along with the following integrability condition.
\begin{hypothesis}\label{hypothesis:L2}
We assume that $E_{\mu_0}[C^2]<\infty$. 
\end{hypothesis}
\noindent For all $\mu \sim \mu_0$, we define the functional:
\begin{equation}\label{eq:objective}
    J(\mu):= E_\mu \left[C\right] + D_{KL}(\mu || \mu_0) = E_\mu \left[C+\log\left(\frac{d\mu}{d\mu_0}\right)\right],
\end{equation}
which is the sum of the expected cost $E_\mu \left[C\right]$ with respect to $\mu$ and the KL-divergence $D_{KL}(\mu || \mu_0)$ of $\mu$ with resepct to the uncontrolled process $\mu_0$. In particular, $D_{KL}(\mu || \mu_0)$ can be interpreted as a penalty for the ``effort'' of control.

Let
\begin{equation}\label{eq:P0-set-def}
\mathcal P_0 := \{\text{Borel probability measures }\mu:\mu\sim \mu_0\}
\end{equation}
be the set of all measures on $\mathcal B$ that are equivalent to the Wiener measure $\mu_0$. The corresponding classical unconstrained variational stochastic optimal control problem is:
\begin{equation}\label{eq:classic}
    \inf_{\mu \in \mathcal P_0}E_{\mu}\left[C+\log \left(\frac{d\mu}{d\mu_0}\right)\right],
\end{equation}
where we note the feasible region in Problem \eqref{eq:classic} is $\mathcal P_0$ instead of $\mathcal P$ (the set of shift measures defined in Eq. \eqref{eq:P-set-def}). The optimality conditions of Problem \eqref{eq:classic} are characterized in the following theorem, where we denote an optimal solution of Problem \eqref{eq:classic} as $\mu^*$.

\begin{theorem}\label{thm:unconstrained}
\cite[Lemma 2.4]{Bierkens} Suppose Hypothesis \ref{hypothesis:FE} holds. Then, the measure with density
\begin{equation}
    \frac{d\mu^\ast}{d\mu_0}:=\frac{\exp(-C)}{E_{\mu_0}[\exp(-C)]}
\end{equation}
exists and $\mu^\ast\in \mathcal P_0$. In addition, if $\mu$ is a Borel probability measure so that $E_\mu [(C)^+]<+\infty$ and $D_{KL}(\mu||\mu^\ast)<+\infty$, then we have:
\begin{equation}~\label{eq:J}
    J(\mu)=D_{KL}(\mu || \mu^\ast)-\log E_{\mu_0}[e^{-C}].
\end{equation} 
Furthermore, $J$ is strictly convex on the set $$\tilde{\mathcal P}:=\{\text{Borel probability measures } \mu:E_\mu [(C)^+]<+\infty\text{ and } D_{KL}(\mu||\mu^\ast)<+\infty\},$$ and $\mu^\ast$ is the unique optimal solution with corresponding optimal value:
$$\inf_{\mu \in \tilde{\mathcal P}} J(\mu)=-\log E_{\mu_0}\left[e^{-C}\right].$$
\end{theorem}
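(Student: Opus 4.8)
The plan is to reduce the entire statement to the single identity \eqref{eq:J}, namely $J(\mu)=D_{KL}(\mu||\mu^\ast)-\log E_{\mu_0}[e^{-C}]$, valid for the measures $\mu$ in question, after first verifying that $\mu^\ast$ is a genuine probability measure with $\mu^\ast\sim\mu_0$. Write $Z:=E_{\mu_0}[e^{-C}]$. I would first check $0<Z<+\infty$: positivity holds because $\{C<+\infty\}=\bigcup_n\{C\le n\}$ has positive $\mu_0$-measure by Hypothesis \ref{hypothesis:FE}, so some $\{C\le n\}$ does and there $e^{-C}\ge e^{-n}$; finiteness holds by splitting the integral, with $e^{-C}\le e$ on $\{|C|<1\}$ and $e^{-C}\le|C|e^{-C}$ on $\{|C|\ge1\}$, the latter $\mu_0$-integrable by Hypothesis \ref{hypothesis:FE} (one may also just invoke the remark following Lemma \ref{lemma:Bierkens-General}). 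Since $C$ is real-valued, $e^{-C}/Z>0$ $\mu_0$-a.s.\ and integrates to $1$, so $\mu^\ast$ is a probability measure with $\mu^\ast\ll\mu_0$ and $\mu_0\ll\mu^\ast$, i.e.\ $\mu^\ast\in\mathcal P_0$.

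For the identity itself I would use the pointwise Radon--Nikodym chain rule rather than manipulating expectations directly. Since $\mu^\ast\sim\mu_0$, $\log\frac{d\mu^\ast}{d\mu_0}=-C-\log Z$ $\mu_0$-a.e., so for every $\mu\ll\mu^\ast$,
\[
   C+\log\frac{d\mu}{d\mu_0}\;=\;C+\log\frac{d\mu}{d\mu^\ast}+\log\frac{d\mu^\ast}{d\mu_0}\;=\;\log\frac{d\mu}{d\mu^\ast}-\log Z\qquad\mu\text{-a.e.},
\]
and integrating against $\mu$ yields \eqref{eq:J}. The point of this arrangement is that the cost $C$ cancels: under $D_{KL}(\mu||\mu^\ast)<+\infty$ the right-hand integrand $\log\frac{d\mu}{d\mu^\ast}$ lies in $L^1(\mu)$ — its negative part being controlled by $s\log s\ge-e^{-1}$ — so both sides of \eqref{eq:J} are honest real numbers once $J$ is read through the combined form $E_\mu[C+\log\frac{d\mu}{d\mu_0}]$ of \eqref{eq:objective}; the hypothesis $E_\mu[(C)^+]<+\infty$ additionally makes the summands $E_\mu[C]$ and $D_{KL}(\mu||\mu_0)$ individually meaningful. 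This step is precisely Lemma \ref{lemma:Bierkens-General}, which could be quoted instead once its hypotheses are matched.

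The remaining claims follow softly from \eqref{eq:J}. The set $\tilde{\mathcal P}$ is convex because $\mu\mapsto E_\mu[(C)^+]$ is affine and $\mu\mapsto D_{KL}(\mu||\mu^\ast)$ is convex; on $\tilde{\mathcal P}$, $J$ differs from $D_{KL}(\cdot||\mu^\ast)$ only by the additive constant $-\log Z$, so strict convexity of $J$ reduces to strict convexity of $\mu\mapsto D_{KL}(\mu||\mu^\ast)$ there. Writing $p_i:=d\mu_i/d\mu^\ast$ and applying the strictly convex function $s\mapsto s\log s$ on $[0,\infty)$, Jensen's inequality is strict unless $p_1=p_2$ $\mu^\ast$-a.e., i.e.\ unless $\mu_1=\mu_2$ — finiteness of $D_{KL}(\mu_i||\mu^\ast)$ on $\tilde{\mathcal P}$ being what makes the Jensen gap a genuine one. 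Finally, $\mu^\ast\in\tilde{\mathcal P}$ since $D_{KL}(\mu^\ast||\mu^\ast)=0$ and $E_{\mu^\ast}[(C)^+]=Z^{-1}E_{\mu_0}[(C)^+e^{-C}]\le Z^{-1}E_{\mu_0}[|C|e^{-C}]<+\infty$; a short computation gives $J(\mu^\ast)=E_{\mu^\ast}[C]+D_{KL}(\mu^\ast||\mu_0)=E_{\mu^\ast}[C]+E_{\mu^\ast}[-C-\log Z]=-\log Z$; and for any $\mu\in\tilde{\mathcal P}$, \eqref{eq:J} gives $J(\mu)=D_{KL}(\mu||\mu^\ast)-\log Z\ge-\log Z$ with equality iff $D_{KL}(\mu||\mu^\ast)=0$ iff $\mu=\mu^\ast$. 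This delivers attainment, the optimal value $-\log Z$, and uniqueness simultaneously.

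The only genuinely delicate point is the integrability bookkeeping behind \eqref{eq:J}: making sure the two stated hypotheses suffice to exclude $\infty-\infty$ indeterminacies. The cancellation of $C$ in the pointwise identity is exactly what makes this manageable, reducing it to $\log\frac{d\mu}{d\mu^\ast}\in L^1(\mu)$, which is $s\log s\ge-e^{-1}$ together with $D_{KL}(\mu||\mu^\ast)<+\infty$. Everything else — existence and $\mu_0$-equivalence of $\mu^\ast$, convexity of $\tilde{\mathcal P}$, strict convexity of $J$, and uniqueness of the minimizer — is routine once \eqref{eq:J} is in hand.
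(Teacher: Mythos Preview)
Your proposal is correct and complete. Note, however, that the paper does not actually prove this theorem: it is stated with the citation \cite[Lemma 2.4]{Bierkens} and no proof is supplied, as with Lemma~\ref{lemma:Bierkens-General} (which is \cite[Lemma 2.4(ii)]{Bierkens}). Your argument is essentially the standard one that underlies the cited result: verify $0<Z<\infty$ from Hypothesis~\ref{hypothesis:FE}, obtain the identity \eqref{eq:J} by the Radon--Nikodym chain rule and the cancellation of $C$, and then read off strict convexity and the unique minimizer from the known properties of $D_{KL}(\cdot\,||\,\mu^\ast)$.

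One small caveat worth flagging: your line ``Since $C$ is real-valued, $e^{-C}/Z>0$ $\mu_0$-a.s.'' implicitly uses $C<+\infty$ $\mu_0$-a.s., which is stronger than the bare Hypothesis~\ref{hypothesis:FE} condition $\mu_0(C<+\infty)>0$. In this paper $C:\mathcal B\to\mathbb R$ is declared real-valued, so this is fine here; but in the fully general Bierkens setting one only gets $\mu^\ast\ll\mu_0$, and the equivalence $\mu^\ast\sim\mu_0$ (hence $\mu^\ast\in\mathcal P_0$) genuinely needs $C<+\infty$ a.e. This does not affect the correctness of your argument in the present context.
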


\noindent By Theorem \ref{theorem:Ito-rep}, if $C$ satisfies Hypothesis \ref{hypothesis:L2} then there exists a progressively measurable process $f(t)$ so that
\begin{equation*}
    C=E_{\mu_0}[C]+\int_0^T f(t) dB(t),
\end{equation*}
where equality holds almost surely. By Theorem \ref{thm:unconstrained}, the density for the optimal measure $\mu^*$ is 
\begin{equation*}
    \frac{d\mu^\ast}{d\mu_0}=\exp\left(-\int_0^T f(t)dB(t)-\frac{1}{2}\int_0^T f^2(t) dt\right), 
\end{equation*}
for this same $f(t)$. Consequently, the optimal measure $\mu^\ast$ corresponds to (in the sense of Definition \ref{def:corresponds}) an optimal drift $F(t)=-\int_0^t f(s) ds$ by Theorem \ref{theorem:Girsanov1}.

As noted before, in this paper, we are interested in minimizing $J(\mu)$ (defined in Eq. \eqref{eq:objective}) subject to the additional constraint that measures correspond to (in sense of Definition \ref{def:corresponds}) a state independent drift. That is, $F(t)$ should be a deterministic path that does not depend on the underlying process $B(t)$. This requirement may be equivalently viewed as constraining the covariance of the controlled process to be the same as that of Brownian motion. In this light, we may express the feasible region $\mathcal P$ (originally defined in Eq. \eqref{eq:P-set-def}) equivalently as:
\begin{equation}\label{eq:P}
    \mathcal P:=\{\mu \in \mathcal P_0: \mu \text{ corresponds to a state independent drift}\}.
\end{equation}
The corresponding constrained optimal control problem is:
\begin{equation*}
    \mathbb P := \inf_{\mu \in \mathcal P} J(\mu) = \inf_{\mu \in \mathcal P} \left\{ E_\mu \left[C\right] + D_{KL}(\mu || \mu_0) \right\}.
\end{equation*}
In the next section we provide a more precise description of the feasible region $\mathcal P$. In particular, we will show that while $J$ is a strictly convex function, the feasible region $\mathcal P$ is not convex, and so Problem $\mathbb P$ is not a convex optimization problem.

Using Lemma \ref{lemma:Bierkens-General}, we can reformulate this constrained optimal control problem as:
\begin{equation}\label{eq:P-reformulation}
    \mathbb P \equiv \inf_{\mu \in \mathcal P} J(\mu) = \inf_{\mu \in \mathcal P} D_{KL}(\mu\|\mu^*) + \log E_{\mu_0}[e^{-C}],
\end{equation}
which reveals the familiar Problem $\mathbb K$. We will use the form of Eq. \eqref{eq:P-reformulation} to connect with Problem $\mathbb M$. Theorem~\ref{theorem:OM-minimization} shows that the optimal solution of Problem $\mathbb K$ is the information projection of the optimal measure $\mu^\ast$ of Problem \eqref{eq:classic} onto $\mathcal P$. By Eq. \eqref{eq:P-reformulation}, we see that the optimal solution of Problem $\mathbb P$ coincides with the optimal solution of Problem $\mathbb K$.

This information projection corresponds to finding the most likely path of an associated process $\tilde X$ (defined through its Onsager-Machlup function). This is philosophically similar to the equivalence of {\it maximum a posteriori} (MAP) inference and information projection in Euclidean spaces. For more information on this relation, see \cite{Joyce2011}. We also observe that the information projection in Problem $\mathbb K$ is intimately connected with MAP estimation and identifying the ``mode'' of the Gaussian measure on a general Banach space~\cite{Stuart-OM}.

We now explicitly reformulate Problem $\mathbb P$ as an optimization problem on path space, and compute the associated process $\tilde X$. Consider the process $\tilde X(t)$ whose Girsanov density is given by
\begin{equation}\label{eq:tilde-mu}
\frac{d\tilde \mu}{d\mu_0}(\omega)=\frac{1}{Z_{\tilde \mu}}\exp\left(-E_{\mu_0}[C(B+\omega)]\right),
\end{equation}
where $Z_{\tilde \mu}$ is a normalizing constant. By Theorem~\ref{theorem:Stuart-OM}, the Onsager-Machlup function associated to $\tilde{\mu}$ is
\begin{equation}\label{eq:OM-for-tilde-X}
OM(\varphi(t))=\begin{cases}E_{\mu_0}\left[C(B+\varphi)\right]+\frac{1}{2}\int_0^T (\dot \varphi(t))^2 dt&\text{ if } \varphi \in W_0^{1,2},\\
\infty&\text{ if }\varphi\not\in W_0^{1,2}.\end{cases}
\end{equation}
This observation leads directly to the following optimization problem on path space:
\begin{equation}\label{eq:path-space}
    \inf_{\varphi \in W_0^{1,2}}E_{\mu_0}[C(B+\varphi)]+\frac{1}{2}\int_0^T (\dot \varphi (t))^2 dt.
\end{equation}

Next we use Girsanov to characterize the process $\tilde X(t)$. We will use $|_{y=x}$ to denote evaluation of $y$ at the point $x$. That is, for a function $g$ we define
\begin{equation*}
    g(y)\bigg|_{y=x}:=g(x).
\end{equation*}
\begin{proposition}~\label{prop:om}
Let $C=C_0-\int_0^T f(t,B(t))dB(t)$ be the It\^o representation of a functional $C:C_0[0,T]\to \mathbb R$ satisfying Hypotheses \ref{hypothesis:FE} and \ref{hypothesis:L2}, where $C_0$ is a constant. Assume that $\Phi$ satisfies Hypothesis \ref{hypothesis:OM}, where $\Phi(z):=E_{\mu_0} [C(\omega+z)]$. Then, the process associated to the measure $\tilde \mu$ defined in Eq. \eqref{eq:tilde-mu} is: 
\begin{equation}\label{eq:tilde-X}
    \tilde X(t)=\int_0^t E_{\mu_0}[f(s,B(s)+x)]\bigg|_{x=B(s)}ds+B(t).
\end{equation}
\end{proposition}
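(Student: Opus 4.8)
The plan is to make the density $d\tilde\mu/d\mu_0 = e^{-\Phi}/E_{\mu_0}[e^{-\Phi}]$, equivalently the potential $\Phi(z)=E_{\mu_0}[C(B+z)]$, explicit in terms of the It\^o-representation integrand $f$, and then read off the drift that $\tilde X$ must carry. The main step is to evaluate $\Phi$ on the Cameron--Martin space, which here is $W_0^{1,2}$ (Proposition~\ref{prop:GM-for-WS}). Fix $z\in W_0^{1,2}$. By the Cameron--Martin theorem (Theorem~\ref{theorem:CM}), $\Phi(z)=E_{\mu_0}[C(B+z)]=E_{\mu_0}\!\big[C\,\exp\!\big(\int_0^T\dot z(s)\,dB(s)-\tfrac12\int_0^T\dot z(s)^2\,ds\big)\big]$; the Radon--Nikodym derivative inside is that of the shift measure $\mu_z=T_z^\ast(\mu_0)$, and under $\mu_z$ the process $\hat B:=B-z$ is a standard Brownian motion (cf.\ Theorem~\ref{theorem:Girsanov1} with drift $z$). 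Substituting $C=C_0-\int_0^T f(s,B(s))\,dB(s)$ and writing $dB(s)=d\hat B(s)+\dot z(s)\,ds$ splits the stochastic integral into a mean-zero It\^o integral against the $\mu_z$-Brownian motion $\hat B$ plus an ordinary integral, so that $\Phi(z)=C_0-\int_0^T E_{\mu_z}[f(s,\hat B(s)+z(s))]\,\dot z(s)\,ds=C_0-\int_0^T g(s,z(s))\,\dot z(s)\,ds$, where $g(s,x):=E_{\mu_0}[f(s,B(s)+x)]$ is deterministic --- this is exactly the inner object $E_{\mu_0}[f(s,B(s)+x)]\big|_{x=\,\cdot}$ in~\eqref{eq:tilde-X}. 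The integrability needed to discard the $d\hat B$ term and to interchange $E_{\mu_z}$ with the time integral comes from Hypotheses~\ref{hypothesis:FE} and~\ref{hypothesis:L2}.

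Next I would feed this into the Onsager--Machlup formula. Since $\Phi$ satisfies Hypothesis~\ref{hypothesis:OM}, Theorem~\ref{theorem:Stuart-OM} gives, for $z\in W_0^{1,2}$, $OM_\Phi(z)=\Phi(z)+\tfrac12\int_0^T\dot z(s)^2\,ds=C_0-\tfrac12\int_0^T g(s,z(s))^2\,ds+\tfrac12\int_0^T\big(\dot z(s)-g(s,z(s))\big)^2\,ds$, and the quadratic term $\tfrac12\int_0^T(\dot z(s)-g(s,z(s)))^2\,ds$ is the Lagrangian/action governing the most likely path of the It\^o diffusion $d\tilde X(t)=g(t,\tilde X(t))\,dt+dW(t)$; unwinding $g$ returns the representation~\eqref{eq:tilde-X}. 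To pin down the drift rather than leave it at the level of Lagrangians, I would extend $\Phi$ from $W_0^{1,2}$ to all of $\mathcal C_0[0,T]$ via the $x$-antiderivative $G(s,x):=\int_0^x g(s,y)\,dy$: on $W_0^{1,2}$, $\int_0^T g(s,z(s))\dot z(s)\,ds=G(T,z(T))-\int_0^T\partial_s G(s,z(s))\,ds$, whose right-hand side is a continuous functional of $z\in\mathcal C_0[0,T]$, so the local Lipschitz bound in Hypothesis~\ref{hypothesis:OM}(iii) extends the identity to all continuous paths; It\^o's formula applied to $t\mapsto G(t,B(t))$ then gives $\Phi(B)=C_0-\int_0^T g(s,B(s))\,dB(s)-\tfrac12\int_0^T\partial_x g(s,B(s))\,ds$, and the associated drift is extracted by matching against Theorems~\ref{theorem:Girsanov1}--\ref{theorem:Girsanov2}. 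One may alternatively express $g$ through the Malliavin derivative using the Clark--Ocone identity $f(s,B(s))=-E_{\mu_0}[D_sC\mid\mathcal F_s]$ (Theorem~\ref{theorem:Clark-Ocone}).

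The delicate point is the It\^o (Stratonovich-to-It\^o) correction that appears in that last step: the clean identity $\Phi(z)=C_0-\int_0^T g(s,z(s))\dot z(s)\,ds$ lives on the smooth paths $W_0^{1,2}$, but descending to a process forces in the extra term $-\tfrac12\int_0^T\partial_x g$, so one must be precise about which object --- the Girsanov density, the Onsager--Machlup functional, or the Euler--Lagrange minimizer --- is being matched when asserting that $\tilde X$ is ``associated to'' $\tilde\mu$, since it is the $\dot z$-linear part of the Lagrangian that identifies the drift $g$. A secondary technical point is checking that $g$ (hence $G$) is regular enough, in $x$ and in $s$, to run It\^o's formula and to invoke the Girsanov converse (Theorem~\ref{theorem:Girsanov2}); this should follow from $E_{\mu_0}[C^2]<\infty$ together with the Gaussian smoothing of $f$ built into $g(s,\cdot)=E_{\mu_0}[f(s,B(s)+\cdot)]$.
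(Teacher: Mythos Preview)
Your first paragraph \emph{is} the paper's proof. The paper computes, for a fixed deterministic path $\varphi$,
\[
\Phi(\varphi)=E_{\mu_0}[C(B+\varphi)]
=C_0-\int_0^T E_{\mu_0}[f(t,B(t)+\varphi(t))]\,d\varphi(t),
\]
by splitting $d(B+\varphi)=dB+d\varphi$ and dropping the mean-zero It\^o part---exactly your computation of $\Phi(z)=C_0-\int_0^T g(s,z(s))\dot z(s)\,ds$ via Cameron--Martin. The paper then stops: it substitutes $\varphi=B$ directly into this formula, writes
\[
\frac{d\tilde\mu}{d\mu_0}=\frac{1}{Z_{\tilde\mu}}\exp\!\left(\int_0^T E_{\mu_0}[f(t,B(t)+x)]\Big|_{x=B(t)}dB(t)\right),
\]
and invokes Girsanov (Theorem~\ref{theorem:Girsanov1}) to read off the drift in~\eqref{eq:tilde-X}. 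There is no detour through the OM functional, no antiderivative $G$, no It\^o's formula.

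Your second and third paragraphs are thus extra scaffolding not present in the paper. The ``delicate point'' you raise---that passing from the smooth-path identity $\Phi(\varphi)=C_0-\int g\,d\varphi$ to Brownian paths should in principle produce a Stratonovich-to-It\^o correction $-\tfrac12\int\partial_x g\,ds$, and that the $-\tfrac12\int g^2\,ds$ term required by Girsanov is path-dependent and cannot be absorbed into $Z_{\tilde\mu}$---is a legitimate concern, and the paper's proof simply does not address it: it silently treats $\int g(t,B(t))\,dB(t)$ as the It\^o integral and matches it to Girsanov form up to the normalizing constant. So your instinct is sound, but for the purpose of reproducing the paper's argument you should stop after the first paragraph; the rest is a (reasonable) attempt to repair an imprecision that the paper leaves as is.
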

\begin{proof}
Let $\varphi\in C_0[0,T]$, then
\begin{align*}
    E_{\mu_0}[C(B+\varphi)]&=C_0-E_{\mu_0}\left[\int_0^T f(t,B(t)+\varphi(t))dB(t)+\int_0^T f(t,B(t)+\varphi(t))d\varphi(t)\right]\\
    &=C_0-\int_0^T E_{\mu_0}[f(t,B(t)+\varphi(t))]d\varphi(t).
\end{align*}
Therefore, the density of $\tilde \mu$ is 
\begin{equation}
    \frac{d\tilde \mu}{d\mu_0}=\frac{1}{Z_{\tilde \mu}} \exp\left(\int_0^T E_{\mu_0}[f(t,B(t)+x)]\bigg|_{x=B(t)} dB(t)\right),
\end{equation}
where we absorbed the constant $C_0$ into the constant $Z_{\tilde \mu}$. The conclusion then follows by Theorem \ref{theorem:Girsanov1}.
\end{proof}


\begin{remark}
    Observe that the equivalent path space problem in Eq. \eqref{eq:path-space} formally parallels the variational representation for the classical unconstrained KL-weighted control Problem~\eqref{eq:classic} from~\cite{BoueDupuis,Bierkens}, where it is shown that
    \[
     -\log E_{\mu_0}[e^{-C}] = \inf_{f \in A} E_{\mu_0} \left[C(B+f) + \frac{1}{2} \int_0^T f^2(t) dt \right],
    \]
    where $A$ is the set of all progressively measurable processes with respect to the natural filtration of the Wiener process $B$.
\end{remark}

\begin{remark}
Interestingly enough, our identification of the optimizer of Problem $\mathbb{P}$ with the most likely path of $\tilde{X}$ as defined in Eq. \eqref{eq:tilde-X} (in the sense of minimizing the Onsager-Machlup function) is closely related to~\cite[Theorem 5]{todorov2011finding}. The latter result identifies the optimal trajectory of a (deterministic) optimal control problem with the most likely path (in the Onsager-Machlup sense) of a related optimally controlled stochastic process.

Specifically,~\cite{todorov2011finding} considers cost functionals of the form $C(B+f) = \int_0^T g(B(t)+f(t), f(t)) dt + G(B(T))$, with cost rate $g(x,u) = \ell(x,u) + \frac{1}{2} |u|^2$. This total cost model corresponds precisely with the KL-weighted optimal control setting under consideration in this section. Our focus, however, is entirely on state independent, open loop control where the optimal control process/drift coincides with the most likely path of $\tilde X$.  
\end{remark}

The results in this section require the verification of Hypotheses~\ref{hypothesis:OM} and~\ref{hypothesis:FE}, which are non-trivial to satisfy even on classical Wiener space. Consequently, in the next sections, we delve further into the Wiener space setting and develop alternative characterizations of the optimal solution of the information projection problem.

\section{Characterization of the Feasible Region in Wiener Space}~\label{sec:char}
We can develop a finer characterization of Problem $\mathbb K$ when $(\mathcal B, \mu_0)$ is classical Wiener space (see Definition \ref{def:Wiener-space}), which we present in this section. In particular, we construct a penalty function corresponding to state independent drifts. This will let us encode the feasible region $\mathcal P$ through a single functional constraint on the space of measures on continuous paths in $\mathcal C_0[0,T]$. In particular, we show that this constraint function is a ``difference of convex'' (DC) functions that is zero if the measure corresponds to a state independent drift, and that is positive otherwise.



Define the function $D:\mathcal P_0\to [0,\infty)$ via
\begin{equation}\label{eq:constraint}
D(\mu):=D_{KL}(\mu||\mu_0)-\frac12 \int_0^T \left(\partial_s E_\mu [B(s)]\right)^2 ds.
\end{equation}
We interpret $D$ as a penalty function which measures violation of the requirement that the drift be state independent, and in particular we will show that $D(\mu)=0$ if and only if $\mu \in \mathcal P$. The following lemma confirms that $D$ correctly checks if $\mu$ corresponds to a state independent drift. The proof uses the idea that the variance of the drift is zero only when it is state independent.
\begin{proposition}\label{lem:D lemma}
For all $\mu \in \mathcal P_0$, $D(\mu) \geq 0$. Additionally, $D(\mu)=0$ if and only if $\mu$ corresponds (in the sense of Definition \ref{def:corresponds}) to a state independent drift. 
\end{proposition}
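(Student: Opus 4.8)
The plan is to use the Girsanov representation from Theorem~\ref{theorem:Girsanov2} and the It\^o isometry to rewrite both terms in $D(\mu)$ in terms of the drift process $f$ corresponding to $\mu$, and then recognize the difference as the time-integral of a conditional variance. Let $\mu \in \mathcal P_0$. By Theorem~\ref{theorem:Girsanov2} there is a progressively measurable process $F(t)$ with $f(t) = F'(t)$ such that $\tilde B(t) := B(t) - F(t)$ is a Brownian motion under $\mu$, and the density is $\frac{d\mu}{d\mu_0} = \exp\left(\int_0^T f(s)\,dB(s) - \frac{1}{2}\int_0^T f^2(s)\,ds\right)$. Taking logarithms and expectations under $\mu$, and using that $\int_0^T f(s)\,dB(s) = \int_0^T f(s)\,d\tilde B(s) + \int_0^T f^2(s)\,ds$ so that $E_\mu\left[\int_0^T f(s)\,dB(s)\right] = E_\mu\left[\int_0^T f^2(s)\,ds\right]$ (the $d\tilde B$ integral being a $\mu$-martingale with zero mean), we obtain
\[
D_{KL}(\mu\|\mu_0) = E_\mu\left[\int_0^T f(s)\,dB(s) - \tfrac{1}{2}\int_0^T f^2(s)\,ds\right] = \tfrac{1}{2}\,E_\mu\left[\int_0^T f^2(s)\,ds\right].
\]

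Next I would compute $\partial_s E_\mu[B(s)]$. Since $B(s) = \tilde B(s) + F(s) = \tilde B(s) + \int_0^s f(r)\,dr$ and $\tilde B$ has zero mean under $\mu$, we get $E_\mu[B(s)] = \int_0^s E_\mu[f(r)]\,dr$, hence $\partial_s E_\mu[B(s)] = E_\mu[f(s)]$ for a.e.\ $s$. Therefore
\[
D(\mu) = \tfrac{1}{2}\int_0^T E_\mu[f(s)^2]\,ds - \tfrac{1}{2}\int_0^T \left(E_\mu[f(s)]\right)^2 ds = \tfrac{1}{2}\int_0^T \operatorname{Var}_\mu(f(s))\,ds.
\]
This immediately gives $D(\mu) \ge 0$ since each integrand is a variance. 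For the equality characterization: $D(\mu) = 0$ forces $\operatorname{Var}_\mu(f(s)) = 0$ for a.e.\ $s$, i.e.\ $f(s) = E_\mu[f(s)]$ a.s.\ under $\mu$ for a.e.\ $s$; thus $f$ is (a.e.-equal to) a deterministic function $s \mapsto E_\mu[f(s)] =: g(s)$, and then $F(t) = \int_0^t g(s)\,ds$ is a deterministic path, so $\mu$ corresponds to a state independent drift. Conversely, if $\mu$ corresponds to a state independent (deterministic) drift $F$, then $f$ is deterministic, so $\operatorname{Var}_\mu(f(s)) = 0$ for a.e.\ $s$ and $D(\mu) = 0$. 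One should note that $D$ takes values in $[0,\infty)$ rather than $[0,\infty]$ only when $D_{KL}(\mu\|\mu_0) < \infty$; I would either restrict attention to that case or observe that when the KL term is infinite the representation still yields $D(\mu) = +\infty \ge 0$ with no state independent drift, which is consistent.

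The main obstacle I anticipate is the exchange-of-integration/measurability bookkeeping: justifying $E_\mu[B(s)] = \int_0^s E_\mu[f(r)]\,dr$ and differentiating under the integral sign requires an integrability hypothesis on $f$ (e.g.\ $E_\mu\int_0^T f^2 < \infty$, equivalently $D_{KL}(\mu\|\mu_0) < \infty$), and one must be careful that $\partial_s E_\mu[B(s)]$ exists a.e.\ — which follows from absolute continuity of $s \mapsto \int_0^s E_\mu[f(r)]\,dr$ once $E_\mu[f] \in L^1[0,T]$, itself guaranteed by Cauchy--Schwarz from the $L^2$ bound. A secondary subtlety is that in the degenerate case $f \equiv 0$ on a set (or $\mu = \mu_0$) the Girsanov exponent and all manipulations remain valid; and the ``deterministic function'' extracted in the equality case is well-defined only up to a.e.\ equivalence in $s$, which is exactly what ``state independent drift'' should mean. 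Everything else is a routine application of the It\^o isometry and Fubini under the stated $L^2$ control.
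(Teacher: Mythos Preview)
Your proposal is correct and follows essentially the same approach as the paper: both use the Girsanov representation to write $D_{KL}(\mu\|\mu_0)=\tfrac{1}{2}E_\mu\int_0^T f^2(s)\,ds$, identify $\partial_s E_\mu[B(s)]=E_\mu[f(s)]$, and recognize $D(\mu)$ as $\tfrac{1}{2}\int_0^T \operatorname{Var}_\mu(f(s))\,ds$. Your write-up is in fact slightly more careful than the paper's about the Fubini/differentiation and a.e.\ issues.
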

\begin{proof}
Fix a shift measure $\mu \in \mathcal P_0$. By Theorem \ref{theorem:Girsanov2}, there exists a progressively measurable process $F$ whose sample paths lie almost surely in the Sobolev space $W_0^{1,2}$ such that $$\log  \left(\frac{d\mu}{d\mu_0}\right)=\int_0^T f(s)\ dB(s)-\frac12 \int_0^T f^2(s) ds,$$ where $f(s) = F'(s)$. In addition, we have $B(t)=F(t)+\tilde B(t)$, where $\tilde B$ is a Brownian motion under $\mu$. We may then compute the KL-divergence
\begin{align*}
    D_{KL}(\mu||\mu_0)&=E_\mu \left[\int_0^T f(s)\ dB(s)-\frac12 \int_0^T f^2(s) ds\right]\\
    &= E_\mu \left[\int_0^T f(s) d(\tilde B(s)+F(s))-\frac12 \int_0^T f^2(s) ds\right]\\
    &=E_\mu \left[\int_0^T f(s) d\tilde B(s)+\frac{1}{2}\int_0^T f^2(s) ds\right].
\end{align*}
An It\^o integral has mean zero, and so 
\begin{equation} \label{eq:D-func-proof}
D_{KL}(\mu||\mu_0)=\frac12 E_\mu \left[\int_0^T f^2(s)\ ds\right].
\end{equation}
Substituting Eq. \eqref{eq:D-func-proof} into Eq. \eqref{eq:constraint}, we arrive at
\begin{equation*}
D(\mu)=\frac12 E_\mu \left[\int_0^T f^2(s)\ ds\right] -\frac12 \int_0^t (\partial_s E_\mu[F(s)])^2\ ds. 
\end{equation*}
To write the second integral, we used the fact that $B(t)=F(t)+\tilde B(t)$, where $\tilde B$ is a Brownian motion under $\mu$. By applying Fubini's theorem and differentiating under the integral sign, we obtain
\begin{equation*}
D(\mu)=\frac12\int_0^T \left(E_\mu [f^2(s)]-(E_\mu [f(s)])^2 \right)ds=\frac12\int_0^T \operatorname{Var}_\mu(f(s))ds.
\end{equation*}
It follows that $D(\mu)=0$ if and only if $\operatorname{Var}_\mu(f(s))=0$ for all $s\in [0,T]$, i.e., $F$ is deterministic. 
\end{proof}
\begin{remark}
Constructing the penalty function $D$ is nontrivial. In general, given an arbitrary shift measure $\mu \in \mathcal P$, where $\mathcal P$ is defined in \eqref{eq:P}, it is difficult to construct the drift $F_\mu(t)$ corresponding to $\mu$. It is even difficult to distinguish between $F(t)$ and $-F(t)$ for deterministic drifts $F\in W_0^{1,2}$. To see this, note that the Girsanov density for $F(t)$ is 
\begin{equation*}
    \frac{d\mu^{(F)}}{d\mu_0}=\exp\left(\int_0^T f(t) dB(t)-\frac{1}{2}\int_0^T f^2(t) dt\right),
\end{equation*}
where $f(t) = F'(t)$, while the Girsanov density for $-F(t)$ is 
\begin{equation*}
        \frac{d\mu^{(-F)}}{d\mu_0}=\exp\left(\int_0^T -f(t) dB(t)-\frac{1}{2}\int_0^T f^2(t) dt\right).
\end{equation*}
As $f$ is deterministic and therefore $\int_0^T f(t) dB(t)\stackrel{d}{=}-\int_0^T f(t) dB(t)$ in distribution, we must have that
\begin{equation*}
    \frac{d\mu^{(-F)}}{d\mu_0}\stackrel{d}{=}\frac{d\mu^{(F)}}{d\mu_0}.
\end{equation*}
It is thus difficult to ``decouple'' $F(t)+B(t)$ through a polarization identity. 
\end{remark}

Based on Proposition \ref{lem:D lemma}, we can reformulate Problem $\mathbb K$ as:
\begin{equation*}
    \mathbb K \equiv \inf_{\mu \in \mathcal P_0} \left\{D_{KL}(\mu||\mu^\ast) : D(\mu ) \leq 0 \right\},
\end{equation*}
where the constraint $\mu \in \mathcal P$ is now encoded by the single functional constraint $D(\mu ) \leq 0$ and the implicit constraint $\mu \in \mathcal P_0$.

Problem $\mathbb K$ is a non-convex optimization problem because the constraint function $D(\mu)$ is non-convex. Specifically, $D(\mu)$ is a ``difference of convex functions'', i.e., the KL-divergence $D_{KL}(\mu||\mu_0)$ (which is convex) and the function $\frac{1}{2}\int_0^T (\partial_s E_\mu [B(s)])^2 ds$ (which is convex because the function $\partial_s E_\mu [B(s)]$ is linear in $\mu$, and the square of a linear function is convex). So, Problem $\mathbb K$ has a single DC constraint (see \cite{tao1997convex,tao2005dc} for discussions on this class of optimization problems). In this situation, it is natural to consider both inner and outer convex approximations of Problem $\mathbb K$. However, we will now see that the feasible region $\mathcal P$ is pathological from the perspective of convex approximation. 

First we consider an inner approximation. As the following proposition shows, the strict convex combination of measures corresponding to deterministic drifts does not correspond to a deterministic drift.

\begin{proposition}\label{lem:convex combination}
Let $\mu_1\sim \mu_0$ and $\mu_2\sim \mu_0$ be two measures corresponding to drifts $F_1(t)$ and $F_2(t)$ (possibly state dependent) in the sense of Definition \ref{def:corresponds}. Then, for $\lambda \in (0,1)$, the convex combination  $\mu:=\lambda \mu_1+(1-\lambda) \mu_2$ corresponds to a drift of 
\begin{equation*}
    F(t):=A\, F_1(t)+(1-A) F_2(t),
\end{equation*}
where $A$ is a Bernoulli random variable independent of $B(t)$ with parameter $\lambda$.
\end{proposition}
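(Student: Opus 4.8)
The plan is to realize the mixture $\mu = \lambda\mu_1 + (1-\lambda)\mu_2$ on an enlarged probability space on which the auxiliary Bernoulli variable $A$ appears explicitly, and then to verify Definition~\ref{def:corresponds} by conditioning on $A$.

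First I would dispatch the routine points. Since $\mu_1 \sim \mu_0$, $\mu_2 \sim \mu_0$ and $\lambda, 1-\lambda \in (0,1)$, one checks immediately that $\mu \sim \mu_0$: $\mu \ll \mu_0$ is inherited from $\mu_1,\mu_2$, and if $\mu(E) = 0$ then both $\mu_1(E) = 0$ and $\mu_2(E) = 0$ (the two terms are nonnegative), hence $\mu_0(E) = 0$. Thus Theorem~\ref{theorem:Girsanov2} already guarantees that $\mu$ corresponds to \emph{some} progressively measurable drift with paths a.s.\ in $W_0^{1,2}$; the content of the proposition is to exhibit this drift in the stated mixture form. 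I would also record at this stage that $F := A F_1 + (1-A)F_2$ has sample paths a.s.\ in $W_0^{1,2}$, since $F$ coincides with $F_1$ on $\{A=1\}$ and with $F_2$ on $\{A=0\}$ and each $F_i$ has this property.

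The heart of the argument is the conditioning step. I would work on $\Omega' := \{0,1\}\times\mathcal C_0[0,T]$, letting $A$ be the first coordinate and $B$ the path coordinate, and define $\mu'$ by $\mu'(\{1\}\times S) = \lambda\,\mu_1(S)$ and $\mu'(\{0\}\times S) = (1-\lambda)\,\mu_2(S)$. Then $A \sim \mathrm{Bernoulli}(\lambda)$, the conditional law of $B$ given $A = i$ is $\mu_{2-i}$ (i.e.\ $\mu_1$ when $A=1$, $\mu_2$ when $A=0$), and the $B$-marginal of $\mu'$ is exactly $\mu$. On $\{A=1\}$ one has $F = F_1$, and since $B$ then has law $\mu_1$, Definition~\ref{def:corresponds} says $B - F_1 = B - F$ is a standard Brownian motion (law $\mu_0$); likewise $B - F$ has law $\mu_0$ conditionally on $\{A=0\}$. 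Hence for any bounded measurable $G$ on path space, $E_{\mu'}[G(B-F)\mid A]$ equals $E_{\mu_0}[G]$ regardless of the value of $A$. This single identity does both jobs at once: it shows $\tilde B := B - F$ is a Brownian motion under $\mu'$, and it shows $\tilde B$ is independent of $A$. Since $B$ has law $\mu$ under $\mu'$ and $\tilde B = B - F$ is a Brownian motion, $F = A F_1 + (1-A)F_2$ corresponds to $\mu$ in the sense of Definition~\ref{def:corresponds}, with $A$ independent of the driving Brownian motion $\tilde B$, which is the assertion.

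I expect the only real friction to be interpretive bookkeeping rather than analysis. One must be clear that ``$F$ corresponds to $\mu$'' is being read on the enlarged space $\Omega'$, where the natural state is the pair $(A,\tilde B)$ (equivalently $(A,B)$); in particular $F$ is measurable only with respect to the larger $\sigma$-algebra generated by $(A,B)$. If one instead wants the drift that is progressively measurable with respect to the filtration of $B$ alone --- the object produced by Theorem~\ref{theorem:Girsanov2} --- it is the optional projection $E_{\mu'}[\dot F \mid \mathcal F^B_t]$, which by Bayes' rule equals $\pi_t\dot F_1 + (1-\pi_t)\dot F_2$ with $\pi_t = \lambda Z^{(1)}_t/\big(\lambda Z^{(1)}_t + (1-\lambda)Z^{(2)}_t\big)$ and $Z^{(i)}$ the Girsanov density process of $\mu_i$; this reconciles the two descriptions but is not needed for the statement as phrased. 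A final point to check in the ``possibly state dependent'' case is that $F_i(t) = F_i(t,B(\cdot))$ is a bona fide functional of the $B$-coordinate on $\Omega'$, so that the conditioning identities above are genuine instances of Definition~\ref{def:corresponds}; this is immediate from the Doob--Dynkin remark following Theorem~\ref{theorem:Girsanov2}.
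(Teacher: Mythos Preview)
Your argument is correct and follows essentially the same conditioning-on-$A$ idea as the paper's proof: both realize $\mu$ as a mixture and use the law of total probability with respect to the Bernoulli variable to identify the drift. Your version is in fact more thorough than the paper's, which only matches one-dimensional marginals $\mu(\{B(t)\le z\})$ with those of $\mu_F$, whereas you check arbitrary bounded measurable functionals $G(B-F)$ and so verify the Brownian law directly; you also make explicit the enlarged-space construction and correctly flag the filtration issue (that $F$ is adapted only to the enlarged filtration $\sigma(A,B)$, not to $\mathcal F^B$ alone), a point the paper leaves implicit.
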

\begin{proof}
Let $B(t)$ be a Brownian motion under $\mu_0$. Then, for each time $t\in [0,T]$ and $z\in \mathbb R$ we have 
\begin{align*}
    \mu\left(\{\omega\in \Omega : B(t,\omega)\leq z\}\right)&=\lambda \mu_1\left(\{\omega\in \Omega : B(t,\omega)\leq z\}\right)\\
    &\quad +(1-\lambda) \mu_2\left(\{\omega\in \Omega : B(t,\omega)\leq z\}\right)\\
    &=\lambda \mu_1\left(\{\omega\in \Omega : F_1(t,\omega)+B^1(t,\omega)\leq z\}\right)\\
    &\quad +(1-\lambda)\mu_2\left(\{\omega\in \Omega : F_2(t,\omega)+B^2(t,\omega)\leq z\}\right),
\end{align*}
where $B^1$ is a Brownian motion under $\mu_1$ and $B^2$ is a Brownian motion under $\mu_2$. Now let $\mu_F$ be the measure that corresponds to the drift of $F(t)=AF_1(t)+(1-A)F_2(t)$. By the law of total probability, for $t\in [0,T]$ and $z\in \mathbb R$ we have that 
\begin{align*}
    \mu_F \left(\{\omega\in \Omega : B(t,\omega)\leq z\}\right)&=\lambda\, \mu_F \left(\{\omega\in \Omega : B(t,\omega)\leq z\}\mid A=1\right)\\
    &\quad+(1-\lambda)\mu_F \left(\{\omega\in \Omega : B(t,\omega)\leq z\}\mid A=0\right)\\
    &=\lambda\, \mu_1\left(\{\omega\in \Omega : F_1(t,\omega)+B^1(t,\omega)\leq z\}\right)\\
    &\quad +(1-\lambda)\mu_2\left(\{\omega\in \Omega : F_2(t,\omega)+B^2(t,\omega)\leq z\right).
\end{align*}
Then, for all $t\in [0,T]$ and $z\in \mathbb R$, we have the equality
\begin{equation*}
    \mu \left(\{\omega\in \Omega : B(t,\omega)\leq z\}\right)=\mu_F\left(\{\omega\in \Omega : B(t,\omega)\leq z\}\right).
\end{equation*}
\end{proof}
\begin{remark}
Proposition \ref{lem:convex combination} implies that the set of measures $\mathcal P$ is precisely the set of extreme points of the set $\mathcal P_0$. If $\mu=\lambda\, \mu_1+(1-\lambda) \mu_2$ for some $\lambda\in (0,1)$ and $\mu_1,\mu_2\in \mathcal P_0$, then the drift associated to $\mu$ is necessarily state dependent (and thus random). Consequently, any inner convex approximation can only consist of singletons, yielding only a trivial inner approximation to Problem $\mathbb K$. 
\end{remark}
\noindent Now we consider outer approximation of $\mathcal P$. An obvious convex relaxation of $\mathcal P$ is to consider the convex hull of $\mathcal P$,  defined as:
$$
\text{conv}\,\mathcal P:=\left\{\sum_{i=1}^n \lambda_i \mu_i : \sum_{i=1}^n \lambda_i =1,\, \lambda \geq 0,\, \{\mu_i\}_{i=1}^n \subset \mathcal P,\, n \in \mathbb N\right\}.
$$
As a consequence of Proposition \ref{lem:convex combination}, we have the following characterization of $\text{conv}\,\mathcal P$.
\begin{corollary}\label{cor:convex hull}
$\text{conv}\,\mathcal P$ is the set of all measures corresponding to drifts of finite \emph{random} combinations of deterministic drifts. That is, $\mu\in \text{conv}\,\mathcal P$ if and only if its corresponding drift $F(t)$ is of the form
\begin{equation*}
    F(t)=
    \begin{cases}
    F_1(t) &\text{ with probability } p_1\\
    &...\\
    F_n(t) &\text{ with probability } p_n
    \end{cases}
\end{equation*}
for some $n\in \mathbb N$, where $F_i$ are deterministic drifts and $p_1+\cdots+p_n=1$.
\end{corollary}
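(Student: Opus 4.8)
The plan is to establish the two inclusions defining the set equality, deriving both from Proposition \ref{lem:convex combination} by induction on the number $n$ of mixture components; no genuinely new tool is needed. First I would record the base observation: Proposition \ref{lem:convex combination} says that a two-term convex combination $\lambda\mu_1 + (1-\lambda)\mu_2$ corresponds to the drift $AF_1 + (1-A)F_2$, where $A \sim \mathrm{Bernoulli}(\lambda)$ is independent of the driving Brownian motion and $F_i$ is the (possibly state dependent) drift of $\mu_i$.

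For the ``$\Leftarrow$'' direction, suppose $F$ is the random combination equal to the deterministic drift $F_i$ with probability $p_i$, and let $\mu_i \in \mathcal P$ be the measure corresponding to $F_i$. An induction on $n$ using Proposition \ref{lem:convex combination}, peeling off one component at a time as in the step below, shows that the measure corresponding to $F$ is $\sum_{i=1}^n p_i\mu_i$, which lies in $\text{conv}\,\mathcal P$ by definition.

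For the ``$\Rightarrow$'' direction, the substantive one, let $\mu = \sum_{i=1}^n\lambda_i\mu_i$ with each $\mu_i \in \mathcal P$ corresponding to a deterministic drift $F_i$; after discarding zero weights assume $\lambda_i > 0$ and $\sum_i \lambda_i = 1$. I would induct on $n$, the case $n=1$ being immediate. For the inductive step, write $\mu = \lambda_1\mu_1 + (1-\lambda_1)\nu$ with $\nu := \sum_{i=2}^n \frac{\lambda_i}{1-\lambda_1}\mu_i$, a convex combination of the $n-1$ measures $\mu_2,\dots,\mu_n \in \mathcal P$. By the inductive hypothesis $\nu$ corresponds to a drift $G$ that equals $F_i$ with probability $q_i := \lambda_i/(1-\lambda_1)$ for $i \geq 2$, with the randomization carried by auxiliary variables independent of the Brownian motion. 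Applying Proposition \ref{lem:convex combination} to $\mu_1$ (drift $F_1$) and $\nu$ (drift $G$, which is legitimately allowed to be state dependent there), $\mu$ corresponds to $AF_1 + (1-A)G$ with $A \sim \mathrm{Bernoulli}(\lambda_1)$ independent of everything else. Conditioning first on $A$ and then on $G$'s randomization gives a drift equal to $F_1$ with probability $\lambda_1$ and to $F_i$ with probability $(1-\lambda_1)q_i = \lambda_i$ for $i \geq 2$, which is precisely the asserted finite random combination of $F_1,\dots,F_n$. This closes the induction.

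The main obstacle I anticipate is bookkeeping rather than conceptual: at each stage the new Bernoulli variable $A$ must be realized on an enlarged probability space so as to be independent both of the Brownian motion and of the auxiliary randomization already used for $\nu$, and one must check that the nested mixture $AF_1 + (1-A)G$ genuinely collapses to a single finite random combination with the advertised weights, which it does since $\lambda_1 + \sum_{i\geq 2}(1-\lambda_1)q_i = 1$. Because Proposition \ref{lem:convex combination} already permits externally randomized drifts, this enlargement introduces nothing new, and the whole argument is simply the law of total probability iterated, exactly in the spirit of the proof of Proposition \ref{lem:convex combination}.
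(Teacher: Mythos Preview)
Your proposal is correct and follows exactly the route the paper intends: the paper states this result as an immediate corollary of Proposition~\ref{lem:convex combination} without writing out a proof, and your induction on the number of mixture components, peeling off one term at a time via Proposition~\ref{lem:convex combination}, is precisely the natural way to supply the omitted details.
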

It follows that the convex hull $\text{conv}\,\mathcal P$ is too large to optimize over effectively (especially since we are talking about measures on path space). In light of Proposition \ref{lem:convex combination} and Corollary \ref{cor:convex hull}, we see that the obvious inner and outer convex approximations of Problem $\mathbb K$ do not yield useful approximations for computing the optimal drift (or the corresponding measure).

\section{Euler-Lagrange Equation for Wiener Space}~\label{sec:el}
In response to the difficulties with the non-convex formulation of Problem $\mathbb K$ in Wiener space discussed in the last section, we give another reformulation of the problem in this section. In particular, we show that the solution of a certain calculus of variations problem (when it exists) corresponds to an optimal solution of Problem $\mathbb K$. In many cases this method is more tractable, and, as we demonstrate in several of our examples, this calculus of variations problem is often a convex optimization problem (in contrast to the formulation of Problem $\mathbb K$ just discussed in Section \ref{sec:char}, which is intrinsically non-convex).

The objective of Problem $\mathbb K$ is to minimize $D_{KL}(\mu||\mu^\ast)$ over $\mu \in \mathcal P$ with respect to some $\mu^\ast$. This measure $\mu^\ast$ satisfies $\mu^\ast \sim \mu_0$, so by Theorem \ref{theorem:Girsanov2} there is a progressively measurable $f$ such that:
$$\frac{d\mu^\ast}{d\mu_0}=\exp\left(\int_0^T f(s, B(s))dB(s)-\frac{1}{2}\int_0^T f^2(s, B(s)) ds\right).$$
For this $f$, we define the following Lagrangian for all $q \in W_0^{1,2}[0,T]$:
\begin{equation}\label{eq:lagrangian-definition}
    L(t,q(t),\dot{q}(t);\,f) := E_{\mu_0}\left[(\dot{q}(t)-f(t,B(t)+q(t)))^2 \right].
\end{equation}
The form of this Lagrangian is not a coincidence. In our upcoming Theorem \ref{theorem:main}, we will show that it is a natural consequence of trying to minimize the KL-divergence $D_{KL}(\mu||\mu^\ast)$.

With this Lagrangian now in hand, we pose the corresponding calculus of variations problem:
\begin{equation}\label{eq:calculus_of_variations}
    \inf_{q \in W_0^{1,2}[0,T]} \left\{ \mathcal{L}(q,\dot{q};\,f) := \frac{1}{2} \int_0^T L(t,q(t),\dot{q}(t);\,f)dt \text{ s.t. } q(0) = 0 \right\}.
\end{equation}
\noindent In Problem \eqref{eq:calculus_of_variations}, we note the initial condition $q(0) = 0$ at time $t = 0$, but we do not enforce any terminal condition at time $t = T$. Much of the classical theory for the calculus of variations requires explicit initial and terminal conditions (see e.g. \cite{calder2020calculus}). So, we can simply consider the alternative calculus of variations problem:
\begin{equation}\label{eq:calculus_of_variations-terminal}
    \inf_{q \in W_0^{1,2}[0,T]} \left\{ \mathcal{L}(q,\dot{q};\,f) := \frac{1}{2} \int_0^T L(t,q(t),\dot{q}(t);\,f)dt \text{ s.t. } q(0) = 0,\, q(T) = a \right\},
\end{equation}
with dummy terminal condition $q(T) = a$ for any $a \in \mathbb R$. Then, we can solve Problem \eqref{eq:calculus_of_variations-terminal} for these specific initial and terminal conditions, and later optimize over $a \in \mathbb R$. We will see in our examples in Section \ref{sec:examples} that this issue does not present any practical difficulty.

We first discuss existence of solutions to Problem \eqref{eq:calculus_of_variations}, and we refer the reader to \cite{calder2020calculus} for the details of these existence results. We first require the following coercivity condition:

\begin{hypothesis}\label{assum:coercivity}
There exist $\alpha >0$ and $\beta \geq 0$ such that $L(t,x,\dot x) \geq \alpha |\dot x|^2 - \beta$ for all $t \in [0,T]$, $x \in \mathbb R$, and $\dot{x} \in \mathbb R$.
\end{hypothesis}

For the next existence result, we recall that the Lagrangian $L(t,q,\dot{q};\,f)$ is automatically bounded below by zero by construction (since it is the expectation of a square), and so the optimal value of Problem \eqref{eq:calculus_of_variations} is also bounded below by zero.

\begin{theorem}\label{thm:existence}
\cite[Theorem 4.25]{calder2020calculus} Suppose $L(t,q,\dot{q};\,f)$ satisfies Hypothesis \ref{assum:coercivity}, and that $\dot{q} \to L(t,q,\dot{q};\,f)$ is convex. Then, Problem \eqref{eq:calculus_of_variations} has a solution in $W_0^{1,2}[0,T]$.
\end{theorem}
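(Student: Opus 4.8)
The plan is to establish existence by the \emph{direct method} of the calculus of variations (Tonelli's method). Since the integrand $L(t,q,\dot q;f)=E_{\mu_0}[(\dot q-f(t,B(t)+q))^2]$ is an expectation of a square, it is nonnegative and finite: expanding the square, $L(t,q,\dot q;f)=\dot q^2-2\dot q\,E_{\mu_0}[f(t,B(t)+q)]+E_{\mu_0}[f(t,B(t)+q)^2]$, with the last two terms controlled by the It\^o isometry and Hypothesis \ref{hypothesis:L2}. Hence the infimum $m$ in Problem \eqref{eq:calculus_of_variations} lies in $[0,\infty)$, and we may fix a minimizing sequence $\{q_n\}\subset W_0^{1,2}[0,T]$ with $q_n(0)=0$ and $\mathcal L(q_n,\dot q_n;f)\to m$. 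Everything else is about extracting a limit of $\{q_n\}$ along which $\mathcal L$ cannot jump up.

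First I would invoke the coercivity Hypothesis \ref{assum:coercivity}: from $L(t,x,\dot x)\ge\alpha|\dot x|^2-\beta$ we get
\[
\mathcal L(q_n,\dot q_n;f)\ \ge\ \frac{\alpha}{2}\int_0^T|\dot q_n(t)|^2\,dt-\frac{\beta T}{2},
\]
so $\|\dot q_n\|_{L^2[0,T]}$ is bounded uniformly in $n$; combined with $q_n(0)=0$ (whence $|q_n(t)|\le\sqrt{T}\,\|\dot q_n\|_{L^2}$) this bounds $\{q_n\}$ in the Hilbert space $W_0^{1,2}[0,T]$. By reflexivity, pass to a subsequence (not relabelled) with $q_n\rightharpoonup q$ weakly in $W_0^{1,2}[0,T]$, i.e. $\dot q_n\rightharpoonup\dot q$ weakly in $L^2[0,T]$; and since the bound on $\dot q_n$ gives the equicontinuity estimate $|q_n(t)-q_n(s)|\le\sqrt{|t-s|}\,\|\dot q_n\|_{L^2}$, Arzel\`a--Ascoli upgrades this to uniform convergence $q_n\to q$ on $[0,T]$. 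In particular $q(0)=\lim_n q_n(0)=0$, so $q$ is feasible for Problem \eqref{eq:calculus_of_variations} (and uniform convergence likewise preserves the extra endpoint condition $q(T)=a$ if one works instead with the variant Problem \eqref{eq:calculus_of_variations-terminal}, to which the classical two-endpoint theory applies verbatim).

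The crux is sequential lower semicontinuity of $\mathcal L$ along this subsequence: $\mathcal L(q,\dot q;f)\le\liminf_n\mathcal L(q_n,\dot q_n;f)=m$, which together with feasibility of $q$ forces $\mathcal L(q,\dot q;f)=m$ and completes the proof. This is precisely the Tonelli lower semicontinuity theorem (in the general form due to Serrin and Ioffe): the integrand $(t,x,v)\mapsto\tfrac12 L(t,x,v;f)$ is nonnegative, is a normal (indeed Carath\'eodory) integrand, and is convex in the velocity variable $v$ by the standing hypothesis — in fact automatically so, being the quadratic $v\mapsto v^2-2v\,E_{\mu_0}[f(t,B(t)+x)]+E_{\mu_0}[f(t,B(t)+x)^2]$ with unit leading coefficient — so that $\mathcal L$ is lower semicontinuous with respect to strong $L^2$-convergence of $q_n$ together with weak $L^2$-convergence of $\dot q_n$. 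For this step I would cite \cite[Theorem 4.25]{calder2020calculus}, whose hypotheses are exactly the coercivity of Hypothesis \ref{assum:coercivity} and convexity in $\dot q$ that we have assumed.

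The step I expect to be the main obstacle is verifying that $L(\cdot,\cdot,\cdot;f)$ is a bona fide normal integrand — i.e., that $(t,x)\mapsto E_{\mu_0}[f(t,B(t)+x)]$ and $(t,x)\mapsto E_{\mu_0}[f(t,B(t)+x)^2]$ are jointly measurable in $t$ and continuous in $x$ — since this is what legitimizes the lower semicontinuity theorem and is not entirely routine when $f$ is only the progressively measurable process furnished by the It\^o representation (Theorem \ref{theorem:Ito-rep}). I would dispatch it under a mild regularity assumption on $f$ (e.g. continuity of $x\mapsto f(t,x)$ plus the $L^2$ bound of Hypothesis \ref{hypothesis:L2}, so that dominated convergence applies); alternatively, one may simply appeal directly to \cite[Theorem 4.25]{calder2020calculus}, since its hypotheses coincide with ours and the conclusion is then immediate.
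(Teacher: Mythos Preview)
Your argument is the standard direct-method proof and is essentially correct, but there is nothing to compare it against: the paper does not prove this theorem. It is stated as a citation of \cite[Theorem 4.25]{calder2020calculus} and invoked as a black box, with no proof given in the paper itself. What you have written is, in outline, exactly the Tonelli argument that one would find in the cited reference (coercivity $\Rightarrow$ $W^{1,2}$-boundedness of a minimizing sequence $\Rightarrow$ weak compactness plus Arzel\`a--Ascoli $\Rightarrow$ sequential weak lower semicontinuity via convexity in $\dot q$), so in that sense your approach and the paper's ``approach'' coincide by construction.

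One minor remark: your concern about verifying that $L$ is a Carath\'eodory/normal integrand is legitimate in general, but the paper sidesteps this entirely by treating Theorem~\ref{thm:existence} as an imported result whose hypotheses (Hypothesis~\ref{assum:coercivity} and convexity in $\dot q$) are simply \emph{assumed} to hold for the particular $L$ at hand. So the regularity of $(t,x)\mapsto E_{\mu_0}[f(t,B(t)+x)^k]$ is not something the paper addresses, and your final paragraph correctly anticipates that one would either impose mild regularity on $f$ or just appeal directly to the cited theorem --- which is precisely what the paper does.
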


The Euler-Lagrange equation corresponding to Problem \eqref{eq:calculus_of_variations} is:
\begin{equation}\label{eq:Euler-Lagrange}
    L_q(t,q,\dot{q};\,f)-\frac{d}{dt}L_{\dot{q}}(t,q,\dot{q};\,f)=0,\,\forall t \in [0, T].
\end{equation}
Eq. \eqref{eq:Euler-Lagrange} is a common {\em necessary} condition for the optimal solutions of Problem \eqref{eq:calculus_of_variations}. The next theorem connects the solutions of Problem \eqref{eq:calculus_of_variations} (that can be characterized by Eq. \eqref{eq:Euler-Lagrange}) with the optimal solutions of Problem $\mathbb K$.

\begin{theorem}\label{theorem:main}
Let $\mu^\ast \sim \mu_0$ be a probability measure with density $$\frac{d\mu^\ast}{d\mu_0}=\exp\left(\int_0^T f(s,B(s)) dB(s)-\frac12\int_0^T f^2(s,B(s)) ds\right).$$ Let $q \in W_0^{1,2}[0,T]$ be a solution, if it exists, to Eq. \eqref{eq:Euler-Lagrange}. Then, the measure defined by 
\begin{equation}
\frac{d\mu}{d\mu_0}=\exp\left(\int_0^T \dot{q}(s) dB(s)-\frac{1}{2}\int_0^T(\dot{q}(s))^2 ds\right)
\end{equation}
is an optimal solution of Problem $\mathbb K$.
\end{theorem}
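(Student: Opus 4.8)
The plan is to reduce Problem $\mathbb K$ to the calculus of variations Problem \eqref{eq:calculus_of_variations} by establishing the identity $D_{KL}(\mu_q\|\mu^\ast)=\mathcal L(q,\dot q;\,f)$ for every shift measure $\mu_q\in\mathcal P$ with drift $q\in W_0^{1,2}[0,T]$, and then combining existence of a minimizer with the Euler--Lagrange necessary condition. Once the identity is in hand the rest is essentially bookkeeping: by the Cameron--Martin theorem (Theorem \ref{theorem:CM}) the assignment $\mu_q\mapsto q$ identifies $\mathcal P$ bijectively with $\{q\in W_0^{1,2}[0,T]:q(0)=0\}$, so minimizing $D_{KL}(\cdot\|\mu^\ast)$ over $\mathcal P$ is the same as minimizing $\mathcal L(q,\dot q;\,f)$ over that set, with matching argmins.

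For the identity, fix $\mu_q\in\mathcal P$ with drift $q$, so $\frac{d\mu_q}{d\mu_0}=\exp(\int_0^T\dot q(s)\,dB(s)-\frac12\int_0^T\dot q(s)^2\,ds)$ and, by Theorem \ref{theorem:Girsanov1}, $\tilde B(t):=B(t)-q(t)$ is a Brownian motion under $\mu_q$. Splitting $\log\frac{d\mu_q}{d\mu^\ast}=\log\frac{d\mu_q}{d\mu_0}-\log\frac{d\mu^\ast}{d\mu_0}$ and taking $E_{\mu_q}$ gives $D_{KL}(\mu_q\|\mu^\ast)=D_{KL}(\mu_q\|\mu_0)-E_{\mu_q}[\int_0^T f(s,B(s))\,dB(s)-\frac12\int_0^T f^2(s,B(s))\,ds]$, where the first term is $\frac12\int_0^T\dot q(s)^2\,ds$ by Theorem \ref{theorem:KL-for-shift} and Proposition \ref{prop:GM-for-WS} (with $\mu_0$ the trivial shift; it also drops out of the same change of measure). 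In the second term I substitute $B(s)=q(s)+\tilde B(s)$ and use the way an It\^o integral transforms under a Girsanov change of measure, namely $\int_0^T f(s,B(s))\,dB(s)=\int_0^T f(s,q(s)+\tilde B(s))\,d\tilde B(s)+\int_0^T f(s,q(s)+\tilde B(s))\dot q(s)\,ds$, with the first integral a $\mu_q$-stochastic integral of mean zero. Since $\tilde B$ has the law of $B$ under $\mu_0$ and $\dot q$ is deterministic, Fubini's theorem yields $E_{\mu_q}[\int_0^T f(s,B(s))\,dB(s)]=\int_0^T\dot q(s)E_{\mu_0}[f(s,B(s)+q(s))]\,ds$ and $E_{\mu_q}[\int_0^T f^2(s,B(s))\,ds]=\int_0^T E_{\mu_0}[f^2(s,B(s)+q(s))]\,ds$. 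Collecting terms and completing the square gives $D_{KL}(\mu_q\|\mu^\ast)=\frac12\int_0^T E_{\mu_0}[(\dot q(s)-f(s,B(s)+q(s)))^2]\,ds=\mathcal L(q,\dot q;\,f)$, exactly the objective of Problem \eqref{eq:calculus_of_variations}.

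To finish, under Hypothesis \ref{assum:coercivity} together with the convexity of $\dot q\mapsto L(t,q,\dot q;\,f)$ (automatic, since $L$ is quadratic in $\dot q$ with leading coefficient $1$), Theorem \ref{thm:existence} supplies a minimizer $q^\ast\in W_0^{1,2}[0,T]$ of Problem \eqref{eq:calculus_of_variations}, and every minimizer solves the Euler--Lagrange equation \eqref{eq:Euler-Lagrange}. A solution $q$ of \eqref{eq:Euler-Lagrange} is then itself the global minimizer --- so that $\mu_q$ solves $\mathbb K$, via the identity and the Cameron--Martin correspondence --- whenever stationarity is also sufficient; this holds when \eqref{eq:Euler-Lagrange} has a unique solution (so $q=q^\ast$) or when $(q,\dot q)\mapsto L(t,q,\dot q;\,f)$ is convex, which happens precisely when $f$ is affine in its spatial argument.

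I expect two points to require the most care. The first is the measure-change bookkeeping: justifying the drift correction picked up by $\int_0^T f(s,B(s))\,dB(s)$ under $\mu_q$ and the vanishing of the residual stochastic integral against $\tilde B$, which needs integrability control on $E_{\mu_q}[\int_0^T f^2(s,B(s))\,ds]$ --- available from Hypothesis \ref{hypothesis:L2} (It\^o isometry bounds the $\mu_0$-version) combined with the fact that the Girsanov density of $\mu_q$ has moments of all orders, since $\int_0^T\dot q(s)^2\,ds$ is a deterministic constant. The second, more conceptual, is the passage from `satisfies Euler--Lagrange' to `globally optimal': because $L(t,q,\dot q;\,f)=E_{\mu_0}[(\dot q-f(t,B(t)+q))^2]$ is generally not jointly convex in $(q,\dot q)$, one must either prove uniqueness for the Euler--Lagrange boundary value problem (with $q(0)=0$ and the natural condition $\dot q(T)=E_{\mu_0}[f(T,B(T)+q(T))]$) or perform a direct verification --- which is what the worked examples of Section \ref{sec:examples} do case by case.
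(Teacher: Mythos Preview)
Your approach is essentially the paper's: both derive the identity $D_{KL}(\mu_q\|\mu^\ast)=\mathcal L(q,\dot q;f)$ via the Girsanov substitution $B=\tilde B+q$ under $\mu_q$ (the paper computes the ratio $d\mu/d\mu^\ast$ directly rather than splitting the log as you do, but the algebra is identical) and then reduce Problem~$\mathbb K$ to the Euler--Lagrange equation. Your flagged concern about the passage from ``satisfies Euler--Lagrange'' to ``globally optimal'' is well placed---the paper's proof ends with the bare assertion that minimizing Eq.~\eqref{eq:main-proof-last-step} ``is equivalent to'' Eq.~\eqref{eq:Euler-Lagrange} and defers sufficiency to the separate Theorem~\ref{theorem:EL-sufficient}---so on that point you are actually more careful than the paper itself.
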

\begin{proof}
Let $$\frac{d\mu}{d\mu_0}=\exp\left[\int_0^T h(s) dB(s)-\frac12\int_0^T h^2(s) ds\right]$$
for some state independent $h(s)\in W_0^{1,2}[0,T]$. We can simplify the ratio
\begin{align*}
    \frac{d\mu}{d\mu^\ast}&=\exp\left(\int_0^T [h(s)-f(s,B(s))] dB(s)-\frac{1}{2}\int_0^T [h^2(s)- f^2(s,B(s))]ds\right)\\
    &=\exp\left(\int_0^T [h(s)-f(s,\tilde B(s)+H(s))] d(\tilde{B}(s)+H(s))-\frac{1}{2}\int_0^T [h^2(s)- f^2(s,\tilde B(s)+H(s))]ds\right),
\end{align*}
where $\tilde B$ is a Brownian motion under $\mu$ and $H'=h$ with $H(0)=0$.

Because $H$ is differentiable, we may write that
\begin{align*}
\frac{d\mu}{d\mu^\ast}&=\exp\bigg(\int_0^T [h(s)-f(s,\tilde B(s)+H(s)) ]d\tilde B(s)\\
&~+\frac{1}{2}\int_0^T [h^2(s)-2h(s)f(s,\tilde B(s)+H(s))+ f^2(s,\tilde B(s)+H(s))] ds\bigg).
\end{align*}
Factoring and then computing the KL-divergence as in the proof of Proposition \ref{lem:D lemma} shows that
\begin{equation}\label{eq:main-proof-last-step}
    D_{KL}(\mu||\mu^\ast)=E_\mu\left[ \log\left(\frac{d\mu}{d\mu^\ast}\right)\right]=\frac{1}{2}E_\mu \left[\int_0^T [h(s)-f(s, \tilde B(s)+H(s))]^2 ds\right].
\end{equation}
Minimizing Eq. \eqref{eq:main-proof-last-step} is equivalent to the Euler-Lagrange Eq. \eqref{eq:Euler-Lagrange}.
\end{proof}

To conclude this discussion, we identify some sufficient conditions for a solution of Eq. \eqref{eq:Euler-Lagrange} to be an optimal solution of Problem \eqref{eq:calculus_of_variations}. This depends on the following joint convexity condition.
\begin{hypothesis}\label{assum:convexity}
The Lagrangian is jointly convex in $(q,\dot q) \to L(t,q,\dot q;\,f)$ for all $t \in [0, T]$.
\end{hypothesis}

\begin{theorem}\label{theorem:EL-sufficient}
\cite[Theorem 4.32]{calder2020calculus} Suppose $L(t,q,\dot q;\,f)$ satisfies Hypothesis \ref{assum:coercivity} and Hypothesis \ref{assum:convexity}. If $q \in W_0^{1,2}[0, T]$ is a weak solution of Eq. \eqref{eq:Euler-Lagrange}, then $q$ is an optimal solution of Problem \eqref{eq:calculus_of_variations}.
\end{theorem}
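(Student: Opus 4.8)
The plan is to exploit the classical principle that joint convexity of the Lagrangian upgrades the (only necessary) Euler--Lagrange condition into a sufficient condition for global optimality. Let $q \in W_0^{1,2}[0,T]$ be a weak solution of \eqref{eq:Euler-Lagrange}, and let $p \in W_0^{1,2}[0,T]$ be an arbitrary competitor with $p(0)=0$; the goal is to prove $\mathcal{L}(p,\dot{p};\,f) \ge \mathcal{L}(q,\dot{q};\,f)$, which is exactly the statement that $q$ solves Problem \eqref{eq:calculus_of_variations}. Throughout, set $\eta := p - q \in W_0^{1,2}[0,T]$, so that $\eta(0)=0$.

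The first step is to invoke Hypothesis \ref{assum:convexity}: joint convexity of $(x,v)\mapsto L(t,x,v;\,f)$ gives the subgradient inequality, valid for almost every $t$,
\[
 L(t,p,\dot{p};\,f) \ge L(t,q,\dot{q};\,f) + L_q(t,q,\dot{q};\,f)\,\eta(t) + L_{\dot q}(t,q,\dot{q};\,f)\,\dot\eta(t).
\]
Integrating over $[0,T]$ and multiplying by $\frac{1}{2}$ yields
\[
 \mathcal{L}(p,\dot{p};\,f) \ge \mathcal{L}(q,\dot{q};\,f) + \frac{1}{2}\int_0^T \bigl[ L_q(t,q,\dot{q};\,f)\,\eta(t) + L_{\dot q}(t,q,\dot{q};\,f)\,\dot\eta(t) \bigr]\,dt.
\]
The second step is to observe that the correction term vanishes. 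Since $q$ is a weak solution of the Euler--Lagrange equation for the free right-endpoint Problem \eqref{eq:calculus_of_variations}, the identity $\int_0^T [ L_q\,\eta + L_{\dot q}\,\dot\eta ]\,dt = 0$ holds for every admissible variation $\eta$ with $\eta(0)=0$; this weak form simultaneously encodes \eqref{eq:Euler-Lagrange} together with the natural (transversality) boundary condition $L_{\dot q}(T,q(T),\dot{q}(T);\,f)=0$. Equivalently, if one works instead with the dummy terminal constraint $q(T)=a$ of \eqref{eq:calculus_of_variations-terminal}, then $\eta(T)=0$ and the boundary term produced by integrating $L_{\dot q}\,\dot\eta$ by parts drops out directly against \eqref{eq:Euler-Lagrange}. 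Applying this identity to $\eta = p-q$ collapses the right-hand side above to $\mathcal{L}(q,\dot{q};\,f)$, which proves global minimality of $q$.

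Two comments about the hypotheses and about the hard part. Coercivity (Hypothesis \ref{assum:coercivity}) plays essentially no role in the sufficiency argument itself; it enters only to guarantee, via Theorem \ref{thm:existence}, that Problem \eqref{eq:calculus_of_variations} is well posed with finite optimal value, so that the inequalities above compare finite quantities, and the sufficiency is driven entirely by the convexity in Hypothesis \ref{assum:convexity}. The genuine technical obstacle is regularity: one must justify that $t\mapsto L_q(t,q(t),\dot{q}(t);\,f)$ and $t\mapsto L_{\dot q}(t,q(t),\dot{q}(t);\,f)$ are well defined almost everywhere and integrable along merely $W^{1,2}$ paths, so that the subgradient inequality may legitimately be integrated and the weak Euler--Lagrange identity invoked; this is precisely the regularity bookkeeping packaged into \cite[Theorem 4.32]{calder2020calculus}. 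In the present application it is harmless, since $L(t,q,\dot{q};\,f) = E_{\mu_0}\bigl[(\dot{q} - f(t,B(t)+q))^2\bigr]$ is a quadratic polynomial in $\dot{q}$ whose $q$-dependence is controlled by the growth and Lipschitz properties inherited from the drift $f$.
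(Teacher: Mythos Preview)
The paper does not give its own proof of this theorem; it simply quotes the result from \cite[Theorem 4.32]{calder2020calculus} without argument. Your proposal is the standard convexity-based sufficiency proof (subgradient inequality plus the weak Euler--Lagrange identity to kill the first-order term) and is correct, so there is nothing further to compare.
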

\noindent We use Theorem \ref{theorem:EL-sufficient} in our examples to justify solving Problem \eqref{eq:calculus_of_variations} by solving the ODE in Eq. \eqref{eq:Euler-Lagrange}.

\section{Examples}~\label{sec:examples}
In this section, we solve several specific instances of Problem $\mathbb P$ where $(\mathcal B, \mu_0)$ is classical Wiener space (see Definition \ref{def:Wiener-space}).

\subsection{Case of $C=\int_0^T f(t,B(t))dB(t)$}

We start in this subsection with the case where the It\^o representation $C=\int_0^T f(t,B(t))dB(t)$ of our cost functional is already known. We may assume $E_{\mu_0}[C]=0$ W.L.O.G. since constants do not change the optimal solution of the minimization problem.

\begin{example}
Let $f(s,B(s))=-B(s)$ so we have $$C=\int_0^T -B(s) dB(s).$$ 
First, we note that $C$ can easily be shown to satisfy Hypotheses \ref{hypothesis:FE} and \ref{hypothesis:L2}. To check that $C$ satisfies Hypothesis \ref{hypothesis:OM}, first we write 
$$C=-\frac{1}{2}B^2(T)+\frac{1}{2}T.$$
Then, we compute $\Phi$ as
$$\Phi(z)=\frac{1}{2}E_{\mu_0}[(B(T)+z(T))^2-T]=\frac{1}{2}z^2(T).$$
To check Hypothesis \ref{hypothesis:OM} Part (i), we just note that for $\varepsilon>0$ and $z\in C_0[0,T]$, we have
$$\Phi(z)\geq 0\geq 0-\varepsilon \|z\|_\infty^2.$$ Thus, Part (i) is satisfied with $M=0$. To check Hypothesis \ref{hypothesis:OM} Part (ii), let $r>0$ and let $K(r)=r^2$. Then, if $\|z\|_\infty<r$ we have $$\Phi(z)=\frac{1}{2}z^2(T)\leq \frac{1}{2}r^2\leq K.$$
To check Hypothesis \ref{hypothesis:OM} Part (iii), let $r>0$ and $L(r)=r$. For $\|z_1\|_\infty <r$ and $\|z_2\|_\infty <r$, we have
\begin{align*}
|\Phi(z_1)-\Phi(z_2)|&=\frac{1}{2}|z_1^2(T)-z_2^2(T)|\\
&=\frac{1}{2}|z_1(T)-z_2(T)||z_1(T)+z_2(T)|\\
&\leq \frac{1}{2}2r \|z_1-z_2\|_\infty\\
&=L\|z_1-z_2\|_\infty.
\end{align*}
Since we have verified Hypothesis \ref{hypothesis:OM}, we know that Problem $\mathbb M$ has a solution. 
The Lagrangian $L$ corresponding to this problem is:
\begin{align*}
L(t,q,\dot q)&=\frac{1}{2}E_{\mu_0}\left[(\dot{q}(t)-(B(t)+q(t)))^2\right]\\
&=\frac{1}{2}E_{\mu_0}\left[\dot{q}^2(t)-2\dot{q}(t)(B(t)+q(t))+(B(t)+q(t))^2\right]\\
&=\frac{1}{2}\left[\dot{q}^2(t)-2q(t)\dot{q}(t)+t+q^2(t)\right]\\
&=\frac{1}{2}\left[(\dot{q}(t)-q(t))^2\right]+\frac{1}{2}t.
\end{align*}
We see that $(q, \dot q) \rightarrow L(t,q,\dot q)$ is jointly convex for all $t \in [0, T]$, and it is bounded below by zero. The functional $\mathcal L$ is minimized at $\dot q(t)=q(t)$ which yields the solution $q(t)=Ce^t$. We additionally require $q(0)=0$, and so we set $C=0$ to find the solution is $q(t)\equiv 0$.

To confirm the optimality of this solution using the Euler-Lagrange equation, we see the derivatives of $L$ are:
\begin{equation*}
    L_q(t,q,\dot q)=-\dot{q}(t)+q(t),
\end{equation*}
and \begin{equation*}
    L_{\dot{q}}(t,q,\dot q)=\dot{q}(t)-q(t).
\end{equation*}
The Euler-Lagrange equation is then
\begin{equation*}
    -\dot{q}(t)+q(t)-\ddot{q}(t)+\dot{q}(t)=0,
\end{equation*}
which has solution $q(t)=C_1 e^t+C_2 e^{-t}$. We require $q(0)=0$, and so we must have $C_1=-C_2$. The minimizer of $L$ is therefore $q(t)=C(e^t-e^{-t})$. As stated in Section \ref{sec:el}, in order to get the constant $C$ we must solve
\begin{equation*}
    \inf_{C} \int_0^T L(t,q,\dot q) dt =\inf_C \int_0^T \left[2C^2 e^{-2t}+\frac{1}{2} t\right] dt,
\end{equation*}
which is minimized when $C=0$. 
Furthermore, we can compute:
\begin{equation*}
    E_{\mu_0}[f(s, B(s)+x)]\bigg|_{x=B(s)}=B(s),
\end{equation*} 
to see that the associated process is:
\begin{equation*}
    \tilde X(t)=\int_0^t B(s) ds+B(t).
\end{equation*}
\end{example}

\begin{example}
Suppose that $\mu^\ast$ (the optimal unconstrained measure) corresponds to a deterministic drift $F(t)$ where $F'(s) = f(s)$ for a deterministic function $f$. The Lagrangian for this problem is:
\begin{align*}
L(t,q,\dot{q})&=\frac{1}{2}E_{\mu_0}\left[\dot{q}^2(t)-2\dot{q}(t)f(t)+f^2(t)\right]\\
&=\frac{1}{2}\left[\dot{q}^2(t)-2\dot{q}(t)f(t)+f^2(t)\right].
\end{align*}
We see that $(q, \dot q) \rightarrow L(t,q,\dot q)$ is jointly convex for all $t \in [0, T]$, and the derivatives of $L$ are:
$$L_q=0,$$
and 
$$L_{\dot q}=\dot q(t)-f(t).$$
The Euler-Lagrange equation is then simply
$$-\frac{d}{dt}(\dot q(t)-f(t))=0,$$
which yields the solution $q(t)=F(t)+C_1 t+ C_2$. We set $C_2=0$ to satisfy the initial condition $q(0)=0$.

Furthermore, we can compute 
\begin{equation*}
    E_{\mu_0}[f(s,B(s)+x)]\bigg|_{x=B(s)}=\tilde{f}(s),
\end{equation*}
to see that the associated process $\tilde X(t)$ is:
\begin{equation*}
    \tilde X(t)=\int_0^t \tilde{f}(s)ds+B(t).
\end{equation*}
\end{example}

\begin{example}
Suppose that $\mu^\ast$ (the optimal unconstrained measure) corresponds to the drift $\int_0^T B^2(s) ds$. The Lagrangian for this problem is then: 
\begin{align*}
    L(t,q,\dot q)&=\frac{1}{2}E_{\mu_0}[\dot q^2(t)-2(B(t)+q(t))^2\dot q(t)+(B(t)+q(t))^4]\\
    &=\frac{1}{2}[\dot q^2(t)-2(t+q^2(t))\dot q(t)+q^4(t)+6tq^2(t)+3t^2].
\end{align*}
The derivatives of $L$ are:
$$L_q(t,q,\dot q)=\frac{1}{2}[-4q(t)\dot q(t)+4q^3(t)+12t q(t)]$$
and 
$$L_{\dot q}(t,q,\dot q)=\frac{1}{2}[2\dot q (t)-2t-2q^2(t)].$$
The resulting Euler-Lagrange equation is:
$$-2q(t)\dot q(t)+2q^3(t)+6tq(t)-(\ddot q(t)-1-2q(t)\dot q(t))=0,$$
which simplifies to the second-order ODE $$2q^3(t)+6tq(t)-\ddot q(t)+1=0.$$

Furthermore, we can compute 
\begin{equation*}
    E_{\mu_0}[f(s,B(s)+x)]\bigg|_{x=B(s)}=s+B^2(s),
\end{equation*}
to see that the associated process $\tilde{X}(t)$ is 
\begin{equation*}
    \tilde X(t)=\int_0^t [s+B^2(s)] ds+B(t).
\end{equation*}
\end{example}

\subsection{Case of $C=\int_0^T g(B(t))dt+G(B(T))$}
In this subsection, we consider cost functionals of the form $C=\int_0^T g(B(t))dt+G(B(T))$. First, we show that such cost functionals satisfy Hypotheses \ref{hypothesis:OM}.
\begin{proposition}
Let $C:\mathcal C_0[0,T]\to \mathbb R$ be a functional of the form $C=\int_0^T g(B(t))dt+G(B(T))$, where: (a) $g$ is $L_g-$Lipschitz continuous; (b) $G$ is $L_G-$Lipschitz continuous; and (c) both $g$ and $G$ are bounded below. Then, $\Phi(z):=E_{\mu_0}[C(\omega+z)]$ satisfies Hypothesis \ref{hypothesis:OM}. 
\end{proposition}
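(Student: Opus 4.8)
The plan is to verify the three conditions of Hypothesis \ref{hypothesis:OM} for $\Phi(z) = E_{\mu_0}[C(\omega + z)]$ directly, exploiting the additive structure $C(\omega) = \int_0^T g(\omega(t))\,dt + G(\omega(T))$ and the Lipschitz and lower-boundedness hypotheses on $g$ and $G$. The key preliminary observation is that, by Fubini's theorem (justified since $g$ and $G$ are bounded below and at most linearly growing, so the relevant expectations are finite),
\[
\Phi(z) = \int_0^T E_{\mu_0}[g(B(t) + z(t))]\,dt + E_{\mu_0}[G(B(T) + z(T))].
\]
This representation reduces everything to controlling the one-dimensional functions $t \mapsto E_{\mu_0}[g(B(t)+a)]$ and $a \mapsto E_{\mu_0}[G(B(T)+a)]$ as $a$ ranges over $\mathbb R$.

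For part (iii), local Lipschitz continuity, I would pass the absolute difference inside the expectation: $|\Phi(z_1) - \Phi(z_2)| \le \int_0^T E_{\mu_0}|g(B(t)+z_1(t)) - g(B(t)+z_2(t))|\,dt + E_{\mu_0}|G(B(T)+z_1(T)) - G(B(T)+z_2(T))|$, then apply the Lipschitz bounds on $g$ and $G$ to get $\le L_g \int_0^T |z_1(t) - z_2(t)|\,dt + L_G |z_1(T) - z_2(T)| \le (L_g T + L_G)\|z_1 - z_2\|_\infty$, so in fact $\Phi$ is \emph{globally} Lipschitz with constant $L = L_g T + L_G$, which is stronger than needed. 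For part (ii), local boundedness above, the Lipschitz property gives $g(x) \le g(0) + L_g|x|$ and $G(x) \le G(0) + L_G|x|$, hence $\Phi(z) \le g(0)T + G(0) + L_g \int_0^T E_{\mu_0}|B(t) + z(t)|\,dt + L_G E_{\mu_0}|B(T)+z(T)|$; bounding $E_{\mu_0}|B(t)+a| \le E_{\mu_0}|B(t)| + |a| \le \sqrt{t} + \|z\|_\infty$ and integrating yields a bound of the form $K(r) = g(0)T + G(0) + L_g(c T^{3/2} + rT) + L_G(c\sqrt{T} + r)$ whenever $\|z\|_\infty < r$ (with $c = \sqrt{2/\pi}$), which is finite. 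For part (i), the lower-growth bound $\Phi(z) \ge M - \varepsilon\|z\|^2$: since $g$ and $G$ are bounded below, say $g \ge m_g$ and $G \ge m_G$, we simply get $\Phi(z) \ge m_g T + m_G =: M$ for \emph{all} $z$, so part (i) holds with this $M$ and any $\varepsilon > 0$ trivially (indeed without the $-\varepsilon\|z\|^2$ term at all).

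The only real subtlety — and the one place the argument must be handled with a little care rather than mechanically — is the justification of Fubini's theorem for the representation of $\Phi$ and the finiteness of all the intermediate expectations; this rests on the fact that the Lipschitz hypotheses force at-most-linear growth of $g$ and $G$, and Gaussian random variables have finite first (indeed all) moments, so there is no genuine obstacle, only bookkeeping. I would state this integrability check once at the start and then the three parts follow by the elementary estimates above. Thus I expect no substantive difficulty in this proposition; the proof is a routine verification, and the main ``work'' is simply organizing the three Lipschitz/lower-bound estimates cleanly.
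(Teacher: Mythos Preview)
Your proposal is correct and follows essentially the same approach as the paper: both verify the three parts of Hypothesis~\ref{hypothesis:OM} directly using the lower bound on $g,G$ for part~(i), the linear growth from Lipschitz continuity together with finite Gaussian first moments for part~(ii), and the global Lipschitz bound $(L_gT+L_G)\|z_1-z_2\|_\infty$ for part~(iii). Your version is in fact slightly more careful in explicitly flagging the Fubini/integrability justification, which the paper glosses over.
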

\begin{proof}
First we check Hypothesis \ref{hypothesis:OM} Part (i). As both $g$ and $G$ are bounded below, there is some $M\in \mathbb R$ such that 
$$g(x)\geq M \text{ and } G(x)\geq M,$$
for all $x\in \mathbb R$. Then, for any $\varepsilon>0$ and $z\in \mathcal C_0[0,T]$, we have that
$$\Phi(z)=E_{\mu_0}\left[\int_0^T g(B(t)+z(t))dt+G(B(T)+z(T))\right]\geq MT+T:=M'\geq M'-\varepsilon\|z\|_\infty^2.$$

\noindent Therefore, Part (i) is satisfied with $M'=MT+T$. 
Next, we check that Hypothesis \ref{hypothesis:OM} Part (ii) is satisfied. First, we note that by Lipschitz continuity we have for $t\in [0,T]$ and $z\in W_0^{1,2}$ that 

\begin{align*}
   E_{\mu_0} |g(B(t)+z(t))|&\leq    E_{\mu_0}|g(B(t)+z(t))-g(0)|+|g(0)|\\
   &\leq L_g    E_{\mu_0}|B(t)+z(t)|+|g(0)|\\
   &\leq L_g \sqrt{\frac{2t}{\pi}}+z(t)+|g(0)|\\
   &\leq L_g \sqrt{\frac{2T}{\pi}}+\|z\|_\infty+|g(0)|.
\end{align*}
An analogous chain of inequalities holds for $G$. Thus, we have that 
\begin{align*}
\Phi(z)&=E_{\mu_0}\left[\int_0^T g(B(t)+z(t))dt+G(B(T)+z(T))\right]\\
&\leq TL_g \sqrt{\frac{2T}{\pi}}+|g(0)|+L_G \sqrt{\frac{2T}{\pi}}+|G(0)|+2\|z\|_\infty . \end{align*}
It follows that $\Phi$ is locally bounded above. 
Lastly, we need to check Hypothesis \ref{hypothesis:OM} Part (iii). Let $z_1,z_2\in W_0^{1,2}$, then

\begin{align*}
|\Phi(z_1)-\Phi(z_2)|&= \bigg|\int_0^T E_{\mu_0}(g(B(t)+z_1(t))-g(B(t)+z_2(t))dt\\
&~+E_{\mu_0}(G(B(T)+z_1(T))-G(B(T)+z_2(T))\bigg|\\
&\leq \int_0^T L_g|z_1(t)-z_2(t)|dt+L_G|z_1(T)-z_2(T)|\\
&\leq (L_g T+L_G)\|z_1-z_2\|_\infty.
\end{align*}
Therefore, we find that $\Phi$ is Lipschitz, which implies that it is locally Lipschitz. 
\end{proof}

To handle this class of cost functionals in our framework, we need to first compute the It\^o representation for $C$ by using the Clark-Ocone formula, and then apply Theorem \ref{theorem:main} (see \cite{NuaBook} for more information on Malliavin calculus).

For the following examples, we will need to compute the Malliavin derivative of $C$. We use the following result on the Malliavin derivative of Lebesgue integrals to do this computation.

\begin{lemma}
Let $X_u$ be a progressively measurable process in the Sobolev-Watanabe space $\mathbb D^{1,2}$, then
\begin{equation*}
    D_t \int_0^T X_u du=\int_t^T D_t (X_u) du.
\end{equation*}
\end{lemma}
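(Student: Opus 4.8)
The plan is to deduce the identity from two ingredients: (1) the general commutation rule $D_t\int_0^T X_u\,du=\int_0^T D_tX_u\,du$, valid for any sufficiently integrable $\mathbb{D}^{1,2}$-valued process, and (2) the fact that progressive measurability (adaptedness) of $X$ forces $D_tX_u=0$ whenever $u<t$, which collapses the domain of integration from $[0,T]$ to $[t,T]$.

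For ingredient (1), I would read $\int_0^T X_u\,du$ as a Bochner integral of the map $u\mapsto X_u$ with values in $\mathbb{D}^{1,2}$, which is well defined under the standing integrability that is implicit in "$X_u$ is a process in $\mathbb{D}^{1,2}$", namely $\int_0^T\big(E_{\mu_0}[X_u^2]+E_{\mu_0}\|D_\cdot X_u\|_{L^2[0,T]}^2\big)\,du<\infty$. Partitioning $[0,T]$ and forming Riemann sums $S_n=\sum_i X_{u_i}\Delta_i$, linearity of the Malliavin derivative gives $D_tS_n=\sum_i (D_tX_{u_i})\Delta_i$. Letting the mesh go to zero, $S_n\to\int_0^T X_u\,du$ in $L^2(\Omega)$ and $D_\cdot S_n\to\int_0^T D_\cdot X_u\,du$ in $L^2(\Omega\times[0,T])$. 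Since $D:\mathbb{D}^{1,2}\subset L^2(\Omega)\to L^2(\Omega\times[0,T])$ is a closed operator, it follows that $\int_0^T X_u\,du\in\mathbb{D}^{1,2}$ and $D_t\int_0^T X_u\,du=\int_0^T D_tX_u\,du$. (Alternatively, one may simply invoke the corresponding commutation statement in \cite{NuaBook} and skip this re-derivation.)

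For ingredient (2), I would recall the standard fact that if $G\in\mathbb{D}^{1,2}$ is $\mathcal F_u$-measurable then $D_tG=0$ for almost every $t>u$, $\mu_0$-a.s.\ (see \cite{NuaBook}). Since $X$ is progressively measurable, $X_u$ is $\mathcal F_u$-measurable, hence $D_tX_u=0$ whenever $u<t$. Substituting into the commutation formula yields
\[
D_t\int_0^T X_u\,du=\int_0^T D_tX_u\,du=\int_t^T D_tX_u\,du,
\]
which is the assertion.

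The hard part is really ingredient (1): making the interchange of $D$ with the Bochner integral rigorous requires pinning down the correct integrability hypothesis (so that both the integral and the Riemann sums converge in the graph norm of $D$) and then appealing to closedness of $D$; the adaptedness step in (2) is then routine. If one wishes to keep the argument short, the cleanest route is to cite the commutation lemma directly from \cite{NuaBook} and devote the proof only to the vanishing-of-$D_tX_u$ observation.
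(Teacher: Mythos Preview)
Your proposal is correct and follows essentially the same two-step approach as the paper: first commute $D_t$ with the Lebesgue integral, then use adaptedness (via \cite[Corollary 1.2.1]{NuaBook}) to kill $D_tX_u$ for $u<t$. The only difference is in the justification of the commutation step: the paper simply invokes the dominated convergence theorem, whereas you argue more carefully via Riemann sums and the closedness of $D$ on $\mathbb{D}^{1,2}$; your version is arguably more rigorous, but the structure of the argument is identical.
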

\begin{proof}
First note that by the dominated convergence theorem, we have that 
$$ D_t \int_0^T X_u du=\int_0^T D_t(X_u)du .$$
Then \cite[Corrolary 1.2.1]{NuaBook} implies that $D_t(X_u)=0$ for $u \leq t$, and hence
$$ \int_0^T D_t(X_u) du=\int_t^T D_t(X_u)du .$$
\end{proof}
\noindent Note that, for the indicator function $\chi_{[0,t]}(s)$, we can rewrite:
$$g(B(t))=g\left( \int_0^T \chi_{[0,t]}(s)dB(s) \right)$$
and
$$G(B(T))=G\left(\int_0^T dB(s)\right).$$ Then, using the above lemma along with the definition of the Malliavin derivative, we have
\begin{equation}
    D_t C=\int_t^T g'(B(u))du+G'(B(T)).
\end{equation}
\noindent Next, in order to use the Clark-Ocone formula, we need to compute the conditional expectations $E_{\mu_0}[D_t C \mid \mathcal F_t]$. We prove the following technical lemma for this purpose.

\begin{lemma}
Let $h : \mathbb R \rightarrow \mathbb R$ be Borel measurable and integrable. Then, 
\begin{equation*}
    E_{\mu_0}[h(B(t))\mid \mathcal F_s]=E_{\mu_0}[h(N_x)]\bigg|_{x=B(s)},
\end{equation*}
where $N_x\sim \mathcal N (x,t-s)$. 
\end{lemma}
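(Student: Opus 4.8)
The plan is to decompose $B(t)$ into an $\mathcal F_s$-measurable part plus an independent increment, and then invoke the standard ``freezing'' (independence) lemma for conditional expectations. Concretely, write
\begin{equation*}
    B(t) = B(s) + W, \qquad W := B(t) - B(s).
\end{equation*}
By the independent-increments property of Brownian motion, $W$ is independent of $\mathcal F_s$ and $W \sim \mathcal N(0, t-s)$, while $B(s)$ is $\mathcal F_s$-measurable. Setting $\psi(x,y) := h(x+y)$, we have $h(B(t)) = \psi(B(s), W)$, so we are exactly in the situation of the freezing lemma: for a measurable $\psi$ with $\psi(B(s),W)$ integrable, $X$ that is $\mathcal G$-measurable and $Y$ independent of $\mathcal G$,
\begin{equation*}
    E[\psi(X,Y) \mid \mathcal G] = \big( E[\psi(x,Y)] \big)\big|_{x=X}.
\end{equation*}

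Applying this with $X = B(s)$, $Y = W$, $\mathcal G = \mathcal F_s$ yields
\begin{equation*}
    E_{\mu_0}[h(B(t)) \mid \mathcal F_s] = \big( E_{\mu_0}[h(x + W)] \big)\big|_{x = B(s)}.
\end{equation*}
Since $W \sim \mathcal N(0,t-s)$, the random variable $x + W$ has law $\mathcal N(x, t-s)$, which is exactly the distribution of $N_x$; hence $E_{\mu_0}[h(x+W)] = E_{\mu_0}[h(N_x)]$, and substituting $x = B(s)$ gives the claimed identity.

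The only point requiring a little care — and the closest thing to an obstacle — is the integrability bookkeeping needed to legitimately apply the freezing lemma: interpreting ``$h$ integrable'' as $E_{\mu_0}[|h(B(t))|] < \infty$, one uses Fubini (on the product of $\mathrm{Law}(B(s))$ and $\mathrm{Law}(W)$) to conclude that $E_{\mu_0}[|h(x+W)|] < \infty$ for $\mathrm{Law}(B(s))$-a.e.\ $x$, so the right-hand side is well defined a.s.\ and the conditional expectation manipulation is justified. This is a routine measure-theoretic argument; for the applications in the sequel (where $h$ is at worst of linear growth, e.g.\ $g'$ or $G'$ Lipschitz) all integrals are manifestly finite, so no subtlety arises in practice.
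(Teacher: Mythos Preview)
Your proof is correct and follows essentially the same approach as the paper: decompose $B(t)=B(s)+(B(t)-B(s))$, invoke the standard freezing/independence lemma with $X=B(s)$, $Y=B(t)-B(s)$, and $f(X,Y)=h(X+Y)$, and identify the law of $x+W$ as $\mathcal N(x,t-s)$. The only addition is your discussion of the integrability bookkeeping, which the paper omits.
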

\begin{proof}
Let $X$ and $Y$ be integrable random variables, and let $\mathcal G$ be a $\sigma-$algebra. If $X$ is $\mathcal G-$measurable and $Y$ is independent of $\mathcal G$, then we have:
$$E[f(X,Y)|\mathcal G]=E[f(x,Y)]\bigg|_{x=X}.$$ Applying this observation to $h$ with $Y=B(t)-B(s)$, $X=B(s)$, and $f(X,Y)=h(X+Y)$ gives the desired result. 
\end{proof}
\noindent Combining the previous two lemmas with the Clark-Ocone formula, we arrive at the expression
\begin{equation*}
    C=E_{\mu_0}[C]+\int_0^T f(s, B(s)) dB(s),
\end{equation*}
where
\begin{equation}\label{eq:f for prototypical}
    f(s,B(s))=\int_s^T E_{\mu_0}[g'(N_x)]\bigg|_{x=B(s)}du +E_{\mu_0}[G'(N_x)]\bigg|_{x=B(s)}.
\end{equation}
Once we find the It\^o representation for $C=\int_0^T g(B(t))dt+G(B(T))$, we can use Theorem \ref{theorem:main} to solve for the optimal deterministic drift.

We now work out some specific examples for various $g$ and $G$.
\begin{example}
Let $C=\int_0^T B(s) ds$. Then, the function $f$ defined in Eq. \eqref{eq:f for prototypical} is specifically
\begin{align*}
    f(s,B(s))&=\int_s^T E_{\mu_0}[1]\bigg|_{x=B(s)} du\\
    &=T-s,
\end{align*}
and we have:
\begin{equation*}
    \int_0^T B(s) ds=\int_0^T (T-s) dB(s).
\end{equation*}
\end{example}
\begin{example}
Let $C=\int_0^T B^2(s)ds$. Then, the function $f$ defined in Eq. \eqref{eq:f for prototypical} is specifically
\begin{align*}
    f(s,B(s))&=\int_s^T E_{\mu_0}[2(N_x)]\bigg|_{x=B(s)} du\\
    &=\int_s^T 2B(s) du\\
    &=2(T-s) B(s),
\end{align*}
and we have:
\begin{equation}
    \int_0^T B^2(s) ds=E_{\mu_0}[C]+\int_0^T 2(T-s)B(s)dB(s).
\end{equation}
\end{example}
\begin{example}
Let $C=\int_0^T B^3(s)ds$. Then, the function $f$ defined in Eq. \eqref{eq:f for prototypical} is specifically
\begin{align*}
    f(s,B(s))&=\int_s^T E_{\mu_0}[3 (N_x)^2]\bigg|_{x=B(s)} du\\
    &=3\int_s^T [(u-s)+B(s)] du\\
    &=(T-s)^3+3(T-s)B(s),
\end{align*}
and we have:
\begin{equation}
    \int_0^T B^3(s)ds=E_{\mu_0}[C]+\int_0^T [(T-s)^3+3(T-s)B(s)]dB(s).
\end{equation}
\end{example}
In line with the previous three examples, we observe that for any polynomial $g$, we will have a polynomial $f$ in Eq. \eqref{eq:f for prototypical}.

\section{Conclusion}\label{sec:conclusion}

This paper is motivated by the problem of doing information projection on Banach spaces with respect to a Gaussian reference measure. We have shown that, in many cases, this kind of projection can be done effectively. In particular, we show that we can often reformulate this information projection problem as a (convex) OM-functional minimization problem and a (convex) calculus of variations problem. The OM perspective suggests a simulation-based scheme for solving the original problem, and the calculus of variations perspective suggests a discretization scheme for solving the original problem (for solving the Euler-Lagrange equation which is a general second-order ODE).

An important future direction for the results of this paper is to use the information projection of the Wiener measure onto Gaussian shift measures, and the characterization of the optimal drift, as a computational approximation to the ``full" F\"ollmer drift. Further problems include the consideration of more general constraint sets and other types of marginal or path constraints on the information projection problem, and characterize the solutions in those settings.

\bibliographystyle{plain}
\bibliography{References}

\end{document}